\theoremstyle{plain}
\newtheorem{teo}{Theorem}[section]
\newtheorem{thm}[teo]{Theorem}
\newtheorem{lem}[teo]{Lemma}
\newtheorem{defn}[teo]{Definition}
\def\out-deg{\text{\rm Out-deg}}
\newtheorem{cor}[teo]{Corollary}
\newcommand{\id}{\mathrm{id}}
\newcommand{\reverse}{\pi}
\newcommand{\hs}{\hat{s}}
\begin{document}

\title[Two-term  identities]{\textbf{Two-Term Polynomial Identities
in Algebras}}
\date{}

\author{Allan Berele}
\address[A.~Berele]{Department of Mathematical Sciences, DePaul University, Chicago, IL, United States}
\email{aberele@depaul.edu}

\author{Peter Danchev}
\address[P.~Danchev]{Institute of Mathematics and Informatics, Bulgarian Academy of Sciences, 1113 Sofia, Bulgaria}
\email{danchev@math.bas.bg; pvdanchev@yahoo.com}
\thanks{P.~Danchev was supported in part by the Junta de Andaluc\'ia under Grant FQM 264. }

\author{Bridget Eileen Tenner}
\address[B.~E.~Tenner]{Department of Mathematical Sciences, DePaul University, Chicago, IL, United States}
\email{bridget@math.depaul.edu}
\thanks{B.~E.~Tenner was partially supported by NSF Grant DMS-2054436.}

\subjclass[2010]{16R10, 16R40, 16U80}

\keywords{commutativity, generalized commutativity, eventual commutativity}

\begin{abstract}
\noindent We study algebras satisfying a two-term multilinear identity, namely one of the form $x_1\cdots x_n=qx_{\sigma(1)}\cdots x_{\sigma(n)}$, where $q$ is a parameter from the base field. We show that such algebras with $q=1$ and $\sigma$ not fixing~1 or~$n$ are eventually commutative in the sense that the equality $x_1\cdots x_k =x_{\tau(1)}\cdots x_{\tau(k)}$ holds for $k$ large enough and all permutations $\tau\in S_k$. Calling the minimal such~$k$ the degree of eventual commutativity, we prove that $k$ is never more than~$2n-3,$
and that this bound is sharp.  For various natural examples, we prove that $k$ can be taken to be $n+1$ or $n+2$. In the case when $q\ne1$, we establish that the algebra must be nilpotent.

We, moreover, demonstrate that if an algebra is eventually commutative of arbitrary characteristic, then it has a finite basis of its polynomial identities, thus confirming the Specht conjecture in this particular case.
\end{abstract}

\maketitle

\section{Introduction and motivation}

The definition of a \emph{commutative algebra} (over an arbitrary field) is well-known and studied early in a mathematician's career: for any two elements $a,b$ in an algebra $A$, the equality $ab=ba$ must be true; that is, the polynomial identity $xy=yx $  is fulfilled in $A$. Despite the ease and familiarity of this definition, it is sometimes not easy to decide whether a given algebra is commutative or not. A recent article relevant to this question is \cite{BD}, and the reader is referred to any of the sources \cite{AGPR}, \cite{DF}, \cite{GZ}, or \cite{R} for the general theory of polynomial identities of algebras.

A weaker form of commutativity is ``eventual commutativity.'' We say that an algebra $A$ is \emph{eventually
commutative of degree~$k$} if it satisfies the identity $$x_1\cdots x_k=x_{\tau(1)}\cdots x_{\tau(k)}$$
for all permutations $\tau\in S_k$.  Of course, if $A$ has a unit, then eventual commutativity is the same as commutativity; thus, we will {\it not} assume that $A$ has a unit. It is not hard to see that if $A$ is eventually commutative of degree~$k$, then $A$ will also be eventually commutative of all higher degrees. So, we will say that $A$ is \emph{eventually commutative} to mean that is eventually commutative of some unspecified degree. Note
that in an eventually commutative ring the commutator ideal must be nilpotent. More specifically, if an algebra is eventually commutative of degree~$k$, then any product of $\lceil \frac12 k\rceil$ commutators will be zero. It follows that if the algebra is semiprime (i.e., it contains no nilpotent ideals) it will be commutative. On the other hand, the direct sum of a commutative algebra with a nilpotent one would be an example of a noncommutative algebra which is eventually commutative.

In 1969, when the Specht conjecture---that every $T$-ideal is finitely generated as a $T$-ideal---was the most celebrated open problem in p.i.~algebras, Latyshev wrote~\cite{La} in which he proved the conjecture for algebras satisfying two-term identities in characteristic zero. In the course of his proof, he proved a powerful lemma which we will show can be used to study eventual commutativity of these algebras.

Our primary focus will be on algebras satisfying identities of the form
$$x_1\cdots x_n=x_{\sigma(1)}\cdots x_{\sigma(n)},$$
for some~$n$ and for some fixed permutation $\sigma \in S_n$. Our main general result, Theorem~\ref{thm:main}, is that if $A$ satisfies such an identity for a permutation $\sigma$ such that $\sigma(1)\ne1$ and $\sigma(n)\ne n$, then $A$ will be eventually commutative of degree at most $2n-3$, and we show that this bound is best possible
by constructing such an algebra for which the degree of eventual commutativity is exactly $2n-3$.

More generally, if we drop the hypotheses that $\sigma$ fixes neither~1 and nor~$n$, then we can write
$x_{\sigma(1)}\cdots x_{\sigma(n)}$ in the form $x_1\cdots x_i u x_j\cdots x_n$, where $u$ does not begin with $x_{i+1}$ nor end with $x_{j-1}$. In this case, our theorem implies that $A$ satisfies all identities of the form
$$x_1\cdots x_k=x_1\cdots x_i \,x_{\tau(i+1)}\cdots x_{\tau(k)}\,x_{k+1}\cdots x_{n-j+k},$$
where $\tau$ is any permutation of $\{i+1,\ldots,k\}$, $k\ge2(j-i)-3$.

In Section~\ref{sec:special}, we turn to some natural special cases of permutations for which an identity $x_1\cdots x_k = x_{\sigma(1)} \cdots x_{\sigma(k)}$ might be known. Perhaps the most important is what we call \emph{generalized commutativity}, in which $\sigma$ is the permutation reversing the sequence $1,2,\ldots,n$ (often called the \emph{longest permutation} or the \emph{long element}). In this case, if $n\not\equiv 1$ mod~4, then $A$ will be eventually commutative of degree~$n+1$, and otherwise it will be eventually commutative of degree~$n+2$. We next consider the cases in which $\sigma$ is the transposition $(1n)$ or the $n$-cycle $(12\cdots n)$. In both of
these scenarios, we prove that $A$ will be eventually commutative of degree~$n+1$.

We round out the picture in Section~\ref{sec:last} by considering more general two-term polynomial identities, namely
$$x_1\cdots x_n=qx_{\sigma(1)}\cdots x_{\sigma(n)},$$
where $q\ne1$ is some element of the base field. We prove that, an algebra satisfying such an identity will be
nilpotent (see Theorem~\ref{nil}). This is closely related to the classical theorem due to Dubnov-Ivanov-Nagata-Higman which asserts that all nil algebras of bounded degree over a field of characteristic~0 or large enough positive
characteristic are always nilpotent (cf. \cite{AGPR} or \cite{DF}).

Since Latyshev wrote \cite{La}, the Specht conjecture was proven by Kemer in
\cite{K1} in characteristic 0, and counterexamples have been found in characteristic~$p$ (see Belov~\cite{B} and~\cite{Bel}, Grishin~\cite{G} and ~\cite{Gr} and Shchigolev~\cite{Sch}). In our last section, we confirm the Specht conjecture in any characteristic in  eventually commutative algebras. The main ideas of this section were generously contributed by the anonymous expert referee to whom we express our sincere gratitude.

We also owe a debt of gratitude to V.~Drensky for pointing out Latyshev's results to us and sending the paper \cite{La}; and to the anonymous referee for a number of improvements to this paper.

\section{General theorems}\label{sec:2}

\subsection{Definitions}

We assume throughout that $A$ is an algebra satisfying the polynomial identity
\begin{equation}
f_\sigma(x_1,\ldots,x_n)=x_1\cdots x_n-x_{\sigma(1)}
\cdots x_{\sigma(n)},
\label{eq:1}
\end{equation}
where $\sigma\in S_n$ is a permutation, and unless specified otherwise, we assume that $\sigma$ does not fix~1 or~$n$. As is commonplace in p.i.~theory, we conflate the statement that $A$ satisfies $f_\sigma$ with the statement $A$ satisfies $f_\sigma=0$. As mentioned in the introduction, we do not assume that $A$ has a unit element. Also, Kemer's classification of non-matrix varieties in~\cite{K} assures that if the characteristic is~0, then these algebras will be commutative-by-nilpotent, but our result is stronger and works in any characteristic.

For each permutation $\tau\in S_k$, we let $x_\tau $ be the monomial $x_{\tau(1)}\cdots x_{\tau(k)}$, and let $f_\tau=f_\tau(x_1,\ldots,x_k)$ be the polynomial $x_{\id}-x_\tau$. (We are using ``$\id$''  for the
identity permutation, and hoping that context will make it clear which symmetric group it is the identity of.) If $M_k$ is the set of all multilinear monomials of degree $k$ in $x_1,\ldots,x_k$, then the identification $\tau\leftrightarrow x_\tau$ identifies $S_k$ with $M_k$. For each $k$, we let
$$H_k\subseteq S_k$$
be the set of permutations such that $f_\tau$ is an identity for $A$.

\medskip

The following technicality is simple but useful.

\begin{lem}\label{lem:transitive equality}
If an algebra satisfies the identities $f_\tau$ and $f_\nu$, $\tau,\nu\in S_k$, then it satisfies the identity $f_{\tau \nu}$.
\end{lem}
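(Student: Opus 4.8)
The plan is to use the standard fact that the set of polynomial identities of $A$ is closed under relabeling of variables: if $g(x_1,\ldots,x_k)$ is an identity of $A$ and $\rho\in S_k$, then $g(x_{\rho(1)},\ldots,x_{\rho(k)})$ is again an identity, since evaluating the latter on an arbitrary tuple $(a_1,\ldots,a_k)$ from $A$ is the same as evaluating $g$ on the tuple $(a_{\rho(1)},\ldots,a_{\rho(k)})$. With this in hand the statement reduces to a one-line manipulation together with the (additive) closure of the space of identities.

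First I would apply this principle to $f_\nu = x_1\cdots x_k - x_{\nu(1)}\cdots x_{\nu(k)}$ with the substitution $x_i\mapsto x_{\tau(i)}$. The first monomial becomes $x_{\tau(1)}\cdots x_{\tau(k)}=x_\tau$, while the $j$-th factor $x_{\nu(j)}$ of the second monomial becomes $x_{\tau(\nu(j))}$, so that monomial becomes $x_{\tau(\nu(1))}\cdots x_{\tau(\nu(k))}=x_{\tau\nu}$ (reading $\tau\nu$ as the permutation $i\mapsto\tau(\nu(i))$). Hence $A$ satisfies the identity $x_\tau - x_{\tau\nu}$. Adding this to the hypothesis $f_\tau = x_{\id}-x_\tau$, which is an identity of $A$ by assumption, and using that identities form an additive group, we conclude that $A$ satisfies $(x_{\id}-x_\tau)+(x_\tau-x_{\tau\nu}) = x_{\id}-x_{\tau\nu}=f_{\tau\nu}$, as claimed.

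There is no substantial obstacle here; the one point that requires care is bookkeeping the order in which the two permutations compose under the variable substitution, so that the conclusion comes out as $f_{\tau\nu}$ and not $f_{\nu\tau}$. If one instead adopts the opposite composition convention, the symmetric argument — substituting into $f_\tau$ rather than $f_\nu$ — yields the statement in that convention, so the result is insensitive to this choice.
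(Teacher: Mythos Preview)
Your proof is correct and is essentially the same argument as the paper's: the paper sets $y_i:=x_{\tau(i)}$ and writes the chain $x_{\id}=x_\tau=y_1\cdots y_k=y_{\nu(1)}\cdots y_{\nu(k)}=x_{\tau\nu}$, which is exactly your substitution $x_i\mapsto x_{\tau(i)}$ into $f_\nu$ followed by combining with $f_\tau$. The only cosmetic difference is that you phrase the last step as an additive combination of two identities, while the paper presents it as a chain of equalities modulo the identities of $A$.
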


\begin{proof}
Set $y_i := x_{\tau(i)}$, so that $x_\tau=y_1\cdots y_k$. Then,
modulo the identities of $A$,
\begin{align*}
x_1\cdots x_k&=x_{\tau(1)}\cdots
x_{\tau(k)}\\
&=y_1\cdots y_k\\
&=y_{\nu(1)}\cdots y_{\nu(k)}\\
&=x_{\tau\nu(1)}\cdots x_{\tau\nu(k)},
\end{align*}
as required.
\end{proof}

It follows from this, and a little bit more work, that the subset $H_k$ is, in fact, a subgroup.

\begin{cor}\label{cor:subgroup}
The set $H_k$ is a subgroup of $S_k$.
\end{cor}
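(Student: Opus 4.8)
The plan is to verify the subgroup axioms for $H_k \subseteq S_k$ using Lemma~\ref{lem:transitive equality} together with one additional observation about inverses. First, $H_k$ is nonempty since $f_{\id} = x_{\id} - x_{\id} = 0$ is trivially an identity of $A$, so $\id \in H_k$. Second, $H_k$ is closed under composition: if $\tau, \nu \in H_k$, then $A$ satisfies both $f_\tau$ and $f_\nu$, so by Lemma~\ref{lem:transitive equality} it satisfies $f_{\tau\nu}$, whence $\tau\nu \in H_k$. The only remaining point is closure under inverses.

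For inverses, suppose $\tau \in H_k$, so $A$ satisfies $x_1 \cdots x_k = x_{\tau(1)} \cdots x_{\tau(k)}$. I would argue that $A$ then also satisfies $f_{\tau^{-1}}$ by relabeling the variables. Concretely, the identity $f_\tau$ holds for all substitutions of elements of $A$ into $x_1, \ldots, x_k$; substituting $x_i \mapsto x_{\tau^{-1}(i)}$ gives
\begin{align*}
x_{\tau^{-1}(1)} \cdots x_{\tau^{-1}(k)} &= x_{\tau^{-1}(\tau(1))} \cdots x_{\tau^{-1}(\tau(k))}\\
&= x_1 \cdots x_k,
\end{align*}
which is precisely the statement that $A$ satisfies $f_{\tau^{-1}}$. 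Hence $\tau^{-1} \in H_k$. Since $S_k$ is finite, closure under composition alone would in fact already force closure under inverses, but the relabeling argument makes this transparent and avoids invoking finiteness.

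Combining these three facts — $\id \in H_k$, closure under products via Lemma~\ref{lem:transitive equality}, and closure under inverses via relabeling — shows that $H_k$ is a subgroup of $S_k$. I do not anticipate any genuine obstacle here; the content is entirely in Lemma~\ref{lem:transitive equality}, which has already been established, and the inverse step is a routine substitution of variables that is valid precisely because polynomial identities are closed under arbitrary substitution of the variables by other variables (or indeed by any elements of the algebra).
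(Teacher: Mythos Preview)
Your proposal is correct and follows essentially the same approach as the paper: both establish closure under inverses by a relabeling of variables (the paper sets $y_i := x_{\tau(i)}$ and reads off $x_i = y_{\tau^{-1}(i)}$, which is exactly your substitution $x_i \mapsto x_{\tau^{-1}(i)}$), and both invoke Lemma~\ref{lem:transitive equality} for closure under products. Your explicit mention of $\id \in H_k$ and the aside about finiteness of $S_k$ are minor additions not present in the paper, but they do not change the substance of the argument.
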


\begin{proof}
We start by showing that if $f_{\tau}$ is an identity of $A$, then so is $f_{\tau^{-1}}$.
Once again, set $y_i := x_{\tau(i)}$. Hence, and critical for our purposes here, we have $x_i = y_{\tau^{-1}(i)}$. Then,
\begin{align*}
        y_{\tau^{-1}(1)} \cdots y_{\tau^{-1}(k)} &= x_1 \cdots x_k \\
        &= x_{\tau(1)} \cdots x_{\tau(k)}\\
        &= y_1 \cdots y_k.
\end{align*}
Therefore, the set $\{\sigma \in S_k : x_{\id} = x_{\tau}\}$ contains inverses and, thanks to Lemma~\ref{lem:transitive equality}, is closed under the group operation; thus, it is a subgroup, as claimed.
\end{proof}

Note that the identification of $M_{k}$ with $S_{k}$ induces right and left actions of $S_{k}$ on $M_{k}$.
The left action is by substitution $\tau f(x_1,\ldots,x_{k})=f(x_{\tau(1)},\ldots,x_{\tau(k)})$, and the right action
is by place permutation, i.e., by permuting the order of the factors in the monomials, so $y_1\cdots y_{k}\tau=
y_{\tau(1)}\cdots y_{\tau(k)}$. For example, $(12)x_1x_3x_2=x_2x_3x_1$ and $x_1x_3x_2(12)=x_3x_1x_2$. The reader familiar with the theory of co-characters, developed by Regev (see, e.g., \cite{AGPR}), will recognize these actions as restrictions of the usual actions of the group algebra $FS_n$ over a field $F$ on the space of multilinear polynomials.

\medskip

We now have the following claim.

\begin{cor}
If $\tau,\nu\in S_k$, then $x_\tau=x_\nu$ is an identity for $A$ if and only if $\tau H_k=\nu H_k$.
\label{cor:2.3}
\end{cor}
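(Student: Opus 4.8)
The plan is to reduce the statement to the definition of $H_k$ by a relabelling of variables, in the same spirit as the proofs of Lemma~\ref{lem:transitive equality} and Corollary~\ref{cor:subgroup}. The only ingredient beyond those is the trivial observation that substituting $x_i\mapsto x_{\rho(i)}$ into an identity of $A$ again produces an identity of $A$ (it is the special case of substituting arbitrary algebra elements in which the elements happen to be among $x_1,\dots,x_k$), together with the bookkeeping that under such a substitution the monomial $x_\mu=x_{\mu(1)}\cdots x_{\mu(k)}$ becomes $x_{\rho\mu}$ — i.e., this is precisely the left action recalled just before the statement.

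First I would prove the forward implication. Assume $x_\tau=x_\nu$ is an identity of $A$ and apply the substitution $x_i\mapsto x_{\nu^{-1}(i)}$ to it. The left-hand monomial becomes $x_{\nu^{-1}\tau}$ and the right-hand monomial becomes $x_{\nu^{-1}\nu}=x_{\id}$, so $x_{\id}=x_{\nu^{-1}\tau}$ is an identity of $A$; that is, $f_{\nu^{-1}\tau}$ is an identity of $A$, which by definition says $\nu^{-1}\tau\in H_k$. Since $H_k$ is a subgroup by Corollary~\ref{cor:subgroup}, the standard coset criterion gives $\tau H_k=\nu H_k$.

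For the reverse implication, assume $\tau H_k=\nu H_k$, equivalently $\nu^{-1}\tau\in H_k$, i.e., $x_{\id}=x_{\nu^{-1}\tau}$ is an identity of $A$. Applying the substitution $x_i\mapsto x_{\nu(i)}$ turns the left-hand side into $x_\nu$ and the right-hand side into $x_{\nu\nu^{-1}\tau}=x_\tau$, so $x_\tau=x_\nu$ is an identity of $A$, completing the equivalence.

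There is no real obstacle here; the statement is essentially a repackaging of the definition of $H_k$ and of the fact (already established) that $H_k$ is a group. The one place to be careful is the composition order: one must substitute by $\nu^{-1}$ rather than $\nu$, and keep straight that substitution is a left action, so that $x_\tau\mapsto x_{\nu^{-1}\tau}$ and $x_\nu\mapsto x_{\id}$. An equivalent route, if one prefers to avoid invoking substitution-invariance explicitly, is to set $y_i:=x_{\nu(i)}$ exactly as in Lemma~\ref{lem:transitive equality}, note that $x_i=y_{\nu^{-1}(i)}$, and rewrite both $x_\tau$ and $x_\nu$ in the $y$-variables; the two arguments are interchangeable.
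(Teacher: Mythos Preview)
Your proof is correct and follows essentially the same approach as the paper: both arguments apply the left substitution action to the identity $x_\tau - x_\nu$ to reduce to the defining condition of $H_k$. The only cosmetic difference is that the paper acts by $\tau^{-1}$ (obtaining $\tau^{-1}\nu\in H_k$) while you act by $\nu^{-1}$ (obtaining $\nu^{-1}\tau\in H_k$), which are equivalent since $H_k$ is a subgroup.
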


\begin{proof}
The polynomial $x_\tau-x_\nu$ is an identity if and only if $\tau^{-1}(x_\tau-x_\nu)$ is an identity, if and only if
$x_{\id}-x_{\tau^{-1}\nu}$ is an identity, if and only if $\tau^{-1}\nu\in H_k$, as asserted.
\end{proof}

Recalling the conflation between ``satisfying an identity $f_{\sigma}$'' and ``satisfying $f_{\sigma} = 0$,'' we make the following definition.

\begin{defn}
The monomial $x_\tau$ is \emph{equivalent} to $x_\nu$ if the algebra $A$ satisfies $x_\tau-x_\nu$.
\end{defn}

One way to understand the polynomial identities of $A$ is to answer the following question: How do the identities of degree $n+1$ relate to the identities of degree~$n$? Specifically, which $f_\tau\in M_{n+1}$ are consequences of our existing identity $f_\sigma$?

\begin{defn}
For each $i \in [1,n]$, define $T_i(f_\sigma)$ to be
$$T_i(f_\sigma)=f_\sigma(x_1,\ldots,x_ix_{i+1},\ldots,x_{n+1}).$$
Additionally, let $T_0(f_\sigma)$ be $x_1f_\sigma(x_2, \ldots,x_{n+1})$ and let $T_{n+1}(f_\sigma)$ be
$f_\sigma(x_1,\ldots,x_n)x_{n+1}$.
\end{defn}

These polynomials will be of the form $f_\tau$ for some $\tau\in S_{n+1}$, and we describe how $\tau$ can be
computed from $\sigma$ and~$i$. Let $\hs_i$ be the cycle permutation $(i,i+1,\ldots,n+1)$.

\medskip

We now come to the following two establishments.

\begin{lem}
With notation as above, $T_i(f_\sigma)$ equals $f_\tau$, where $\tau = \hs_{i+1}\sigma\hs_{
\sigma^{-1}(i)+1}^{-1}$, identifying $\sigma\in S_n$ with the permutation in $S_{n+1}$ fixing $n+1$.\label{lem:2.4}
\end{lem}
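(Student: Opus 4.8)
The plan is to compute $T_i(f_\sigma)$ directly as a difference of two multilinear monomials of degree $n+1$ and to read off the permutation $\tau$ letter by letter. Writing $f_\sigma=X_1\cdots X_n-X_{\sigma(1)}\cdots X_{\sigma(n)}$ in formal indeterminates $X_j$, the substitution defining $T_i$ is $X_j\mapsto x_j$ for $j<i$, $X_i\mapsto x_ix_{i+1}$, and $X_j\mapsto x_{j+1}$ for $j>i$. The first monomial becomes $x_1\cdots x_{n+1}=x_{\id}$ in $S_{n+1}$, so everything reduces to identifying the image $W$ of the second monomial. The point to exploit is that this substitution is governed by the cycle $\hs_{i+1}=(i+1,i+2,\ldots,n+1)$: since $\hs_{i+1}$ fixes $1,\ldots,i$, sends $j\mapsto j+1$ for $i+1\le j\le n$, and sends $n+1\mapsto i+1$, a single letter transforms by $X_j\mapsto x_{\hs_{i+1}(j)}$ whenever $j\ne i$, and by $X_i\mapsto x_ix_{i+1}=x_{\hs_{i+1}(i)}\,x_{\hs_{i+1}(n+1)}$.

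Next, set $p:=\sigma^{-1}(i)$, so that $X_i$ sits in slot $p$ of the second monomial. Applying the letter rule above to $X_{\sigma(1)}\cdots X_{\sigma(n)}$ yields
$$W=x_{\hs_{i+1}(\sigma(1))}\cdots x_{\hs_{i+1}(\sigma(p-1))}\;x_{\hs_{i+1}(i)}\,x_{\hs_{i+1}(n+1)}\;x_{\hs_{i+1}(\sigma(p+1))}\cdots x_{\hs_{i+1}(\sigma(n))}.$$
Thus $W=x_{\hs_{i+1}\circ w}$, where $w\in S_{n+1}$ is the sequence $\bigl(\sigma(1),\ldots,\sigma(p-1),\,i,\,n+1,\,\sigma(p+1),\ldots,\sigma(n)\bigr)$, and consequently $\tau=\hs_{i+1}\,w$. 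It then remains only to verify that $w=\sigma\hs_{p+1}^{-1}$, where $\sigma$ is identified with its extension to $S_{n+1}$ fixing $n+1$.

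This last identity is a one-line check with the cycle $\hs_{p+1}^{-1}$, which fixes $1,\ldots,p$, sends $p+1\mapsto n+1$, and sends $m\mapsto m-1$ for $p+2\le m\le n+1$: composing with $\sigma$ gives $\sigma(m)$ for $m\le p$ (and recall $\sigma(p)=i$), gives $\sigma(n+1)=n+1$ for $m=p+1$, and gives $\sigma(m-1)$ for $m\ge p+2$ — precisely the sequence $w$. Hence $\tau=\hs_{i+1}\sigma\hs_{p+1}^{-1}=\hs_{i+1}\sigma\hs_{\sigma^{-1}(i)+1}^{-1}$, as claimed.

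The argument is essentially bookkeeping, and the only genuinely non-routine move is the slot-doubling at position $p$, where the single factor $X_i$ expands into the adjacent pair $x_ix_{i+1}$; this is exactly what forces the value $n+1$ into $w$ and hence produces the factor $\hs_{p+1}^{-1}$. The main thing to be careful about is orientation: keeping straight the directions of the cycles $\hs_{i+1}$ and $\hs_{p+1}$ and of their inverses, together with the off-by-one shifts around the index $i$ and the position $p$. Once the two displayed formulas are in place, no further computation is needed; it may also be worth recording explicitly that $W$ contains $x_i$ and $x_{i+1}$ only as this adjacent pair (since $\hs_{i+1}$ never outputs the value $i+1$), which is what makes the description of $W$ unambiguous.
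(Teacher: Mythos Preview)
Your proof is correct and follows essentially the same approach as the paper's. Both arguments decompose $T_i$ into a relabeling of subscripts governed by $\hs_{i+1}$ (the paper's left action) and an insertion of the new letter after position $p=\sigma^{-1}(i)$ governed by $\hs_{p+1}^{-1}$ (the paper's right action); the paper states this in two sentences using its left/right action language, while you carry out the same bookkeeping explicitly letter by letter.
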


\begin{proof}
The right action of $\hs_{\sigma^{-1}(i)+1}^{-1}$ on $x_\sigma x_{n+1}$ moves $x_{n+1}$ to the position to the right of $x_i$, i.e., if $x_\sigma=ux_ivx_{n+1}$, then $\hs_{\sigma^{-1}(i)+1}^{-1} x_\sigma$ equals $ux_ix_{n+1}v$; and then the left action of $\hs_{i+1}$ increases the indices of all $x_j$ for $n+1>j>i$ and substitutes $x_{i+1}$ for $x_{n+1}$.
\end{proof}

\begin{thm}\label{thm:Consequences} Let $A$ be a p.i.~algebra whose $T$-ideal of identities is generated by the set of $f_\sigma$, for $\sigma\in H_n$. Then, for all $k>n$, the space of multilinear identities of degree~$k$ in
$x_1,\ldots,x_k$ is spanned by the set $\{\nu f_\tau| \tau\in H_k\}$, and all elements of $H_k$ are gotten from $H_n$ by repeated applications of the~$T_i$ to $H_n$.
\end{thm}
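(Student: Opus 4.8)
The plan is to prove the two assertions of Theorem~\ref{thm:Consequences} simultaneously by induction on $k$, with the base case $k=n$ being the hypothesis that the $T$-ideal is generated by $\{f_\sigma : \sigma \in H_n\}$ (together with the observation, from Corollary~\ref{cor:subgroup} and Corollary~\ref{cor:2.3}, that the degree-$n$ multilinear identities are exactly the span of $\{\nu f_\tau : \tau \in H_n\}$). The inductive step rests on a standard fact from p.i.-theory: if a $T$-ideal is generated by a set $S$ of multilinear polynomials, then the multilinear component in degree $k$ of that $T$-ideal is spanned by all elements of the form $\mu \cdot w(x_1,\dots,x_k)$, where $w$ ranges over the iterated \emph{total multilinearizations-with-substitutions} of the generators obtained by replacing variables with products of consecutive fresh variables and then left-multiplying or right-multiplying by remaining fresh variables (i.e.\ exactly the operations $T_0,\dots,T_{m+1}$ applied repeatedly to a degree-$m$ generator), and $\mu \in S_k$ acts by substitution. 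I would state this as a lemma, or cite it from Regev's theory of cocharacters as hinted in the text, since the left/right $S_k$-actions have already been set up.

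First I would record that each $T_i(f_\sigma)$ is, by Lemma~\ref{lem:2.4}, again of the form $f_\tau$ for an explicit $\tau \in S_{n+1}$; more importantly, if $f_\sigma$ is an identity of $A$ then so is $T_i(f_\sigma)$ (the operations $T_i$ send identities to identities: $T_0$ and $T_{n+1}$ because identities form an ideal, and $T_1,\dots,T_n$ because identities are closed under substitution $x_i \mapsto x_i x_{i+1}$). Hence if $\sigma \in H_n$ then the permutation $\tau$ produced by $T_i$ lies in $H_{n+1}$, and inductively every permutation reachable from $H_n$ by iterated $T_i$'s lies in the corresponding $H_k$. This gives the ``$\subseteq$'' half of the second assertion for free; the content is the reverse containment, namely that \emph{all} of $H_k$ arises this way, and that the degree-$k$ multilinear identities are spanned by $\{\nu f_\tau : \tau \in H_k\}$ with nothing left over.

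For the inductive step proper: assume the statement in degree $k-1$. By the structural lemma above, the degree-$k$ multilinear identities are spanned by elements $\mu \cdot g$ where $g$ is $T_i$ applied to some degree-$(k-1)$ multilinear consequence of the generators, hence (by the inductive hypothesis) $g = T_i(\nu' f_\tau)$ for some $\tau \in H_{k-1}$ and $\nu' \in S_{k-1}$. Here I would do the bookkeeping showing that $T_i(\nu' f_\tau)$ can be rewritten as $\nu'' f_{\tau'}$ where $\tau'$ is obtained from $\tau$ by one of the $T$-operations in the sense of Lemma~\ref{lem:2.4} (the substitution operation commutes appropriately with the left action up to relabelling by $\hs$-cycles, exactly as in the proof of that lemma) — so $\tau' \in H_k$ by the first paragraph of this plan — and then absorb $\mu \nu''$ into a single element $\nu \in S_k$. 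This shows every degree-$k$ multilinear identity lies in $\mathrm{span}\{\nu f_\tau : \tau \text{ reachable from } H_n \text{ by } T_i\text{'s}\}$; since each such $f_\tau$ is itself an identity, that set of reachable $\tau$'s is contained in $H_k$, and conversely any $\rho \in H_k$ has $f_\rho$ an identity, hence in the span, hence (projecting onto the $x_{\id}$-coset via Corollary~\ref{cor:2.3}) $\rho$ is reachable. This closes the induction.

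The main obstacle is the bookkeeping in the inductive step: one must verify that the operation ``apply $T_i$, then expand'' genuinely stays inside the span of $\{\nu f_\tau\}$ rather than producing a larger space — equivalently, that substituting $x_i \mapsto x_i x_{i+1}$ into an arbitrary \emph{substitution-twisted} identity $\nu' f_\tau$ can be re-expressed using only (i) further $T$-operations on $\tau$ and (ii) a left $S_k$-action, with no new ``interior'' breaking that the $T_i$ mechanism doesn't already capture. I expect this to be the routine-but-delicate heart of the argument; it is the combinatorial analogue of the classical fact that consequences of a multilinear identity are generated by its "partial linearizations," and I would lean on the explicit cycle-conjugation formula of Lemma~\ref{lem:2.4} to make the relabelling precise rather than arguing abstractly.
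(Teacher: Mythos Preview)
Your inductive plan is correct in outline, but the paper takes a shorter direct route that sidesteps exactly the ``bookkeeping obstacle'' you flag. Rather than climbing one degree at a time, the paper goes from degree $n$ to degree $k$ in a single step: every multilinear degree-$k$ consequence of the generators is a linear combination of expressions $u_0 f_\sigma(u_1,\ldots,u_n)u_{n+1}$ with the $u_i$ monomials whose concatenation is a permutation of $x_1\cdots x_k$. Because each $f_\sigma$ has only two terms, each such expression is already a single difference $x_\alpha-x_\beta$, which by Corollary~\ref{cor:2.3} equals $\alpha f_{\alpha^{-1}\beta}$ with $\alpha^{-1}\beta\in H_k$. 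This gives the first assertion immediately, with no need to track how $T_i$ interacts with a pre-existing left action $\nu'$. For the second assertion the paper simply renames variables so that $u_0\cdots u_{n+1}=x_1\cdots x_k$ (absorbing the renaming into the left action applied once), after which the monomial substitution is visibly an iterated application of the $T_i$; the case where instead $u_0u_{\sigma(1)}\cdots u_{\sigma(n)}u_{n+1}=x_1\cdots x_k$ is reduced to the first by passing to $\sigma^{-1}\in H_n$. Your inductive argument would reconstruct the same conclusion, and your sketch of the relabelling via Lemma~\ref{lem:2.4} is correct, but the induction carries the left action through every step and thereby creates the very bookkeeping you worry about.

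There is one genuine soft spot in your outline. Your final clause---``any $\rho\in H_k$ has $f_\rho$ an identity, hence in the span, hence (projecting onto the $x_{\id}$-coset via Corollary~\ref{cor:2.3}) $\rho$ is reachable''---does not do what you want. Knowing that $f_\rho$ lies in the span of $\{\nu f_\tau:\tau\text{ reachable}\}$ only tells you that $\rho$ lies in the \emph{subgroup generated} by the reachable permutations, since a sum $\sum c_i(x_{\nu_i}-x_{\nu_i\tau_i})$ equalling $x_{\id}-x_\rho$ forces a chain $\id=\mu_0,\mu_1,\ldots,\mu_m=\rho$ with successive ratios in the reachable set or its inverses, not that $\rho$ itself is reachable by a single iterated-$T_i$ string. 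There is no ``projection'' that extracts $\rho$ from the span. The paper's own argument is equally brief at this point; it identifies which spanning-set elements are literally of the form $f_\tau$ and shows those $\tau$ are reachable, which is the same statement you reach and has the same limitation.
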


\begin{proof}
The space of polynomials which are consequences of $f_\sigma$ is spanned by all
\begin{equation}
u_0f_\sigma(u_1,\ldots,u_n)u_{n+1},\label{eq:consequnce}
\end{equation}
where, because $A$ is not assumed to have a unit, all of the terms in $u_1,\ldots,u_n$ are of degree at least~1. Since $f_\sigma$ is multilinear, we may restrict all the $u_i$ to be monomials. In order for \eqref{eq:consequnce} to be
multilinear and degree~$k$ in $x_1,\ldots,x_k$, the product $u_0\cdots u_{n+1}$ must be a permutation of $x_1\cdots x_k$, in which case \eqref{eq:consequnce} will be of the form $f_\tau-f_\mu$. According to Corollary~\ref{cor:2.3}, it will be a consequence of some $f_\tau$, $\tau\in H_k$.

Finally, the expression~\eqref{eq:consequnce} will be of the form $f_\tau$ if one of $u_0\cdots u_{n+1}$ or $u_0u_{\sigma(1)} \cdots u_{\sigma(n)}u_{n+1}$ equals $x_1\cdots x_k$. In the former case, the permutation $\tau$ can be obtained from~$\sigma$ by repeated applications of the~$T_i$. In the latter case, one finds that
$$-u_0f_{\sigma^{-1}}(u_{\sigma(1)},\ldots,u_{\sigma^{-1}(n})
    u_{n+1}=u_0u_{\sigma(1)}
\cdots u_{\sigma(n)}u_{n+1}-u_0u_1\cdots u_nu_{n+1},$$
which equals \eqref{eq:consequnce}, reducing to the former case, as asked for.
\end{proof}

\subsection{Eventual commutativity}\label{sec:two term identities}

In this section, we prove that any algebra satisfying (\ref{eq:1}), with $\sigma$ fixing neither $1$ nor $n$, will be eventually commutative; namely, it will satisfy
$$f_\tau =x_1\cdots x_k-x_{\tau(1)}\cdots x_{\tau(k)},$$
for all $k\ge 2n-3$ and for all $\tau\in S_k$.

\medskip

In \cite{La} Latyshev proved the Specht conjecture for algebras satisfying two-term identities. In the course of his
proof he proved the following theorem, which turns out to be a key ingredient in our study of eventual commutativity. Although Latyshev's paper assumes a priori characteristic zero, the proof of this theorem is purely combinatorial and does not rely on any characteristic, as we will indicate below.

\begin{thm} If an algebra satisfies $f_\sigma$ for some $\sigma\in S_n$ not fixing~1, then it satisfies the identities
$$x_1\cdots x_{k+n}=x_{\tau(1)}\cdots x_{\tau(k)}x_{k+1}
\cdots x_{k+n}$$ for all $k$ and all $\tau\in S_k$.
\label{thm:Latyshev}
\end{thm}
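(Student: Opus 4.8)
The plan is to single out the hardest instance of the asserted family --- the case $k=2$, which says $A$ satisfies $x_1x_2x_3\cdots x_{n+2}=x_2x_1x_3\cdots x_{n+2}$ --- and to derive every other instance from it by a bubble-sort argument. So I would first prove the reduction: \emph{if $A$ satisfies $x_1x_2x_3\cdots x_{n+2}=x_2x_1x_3\cdots x_{n+2}$, then $A$ satisfies $x_1\cdots x_{k+n}=x_{\tau(1)}\cdots x_{\tau(k)}x_{k+1}\cdots x_{k+n}$ for all $k\ge1$ and all $\tau\in S_k$}. Indeed, write $\tau$ as a product of simple transpositions $(i,i+1)$ with $1\le i\le k-1$. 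Each such transposition swaps two adjacent factors at some position $i\le k-1$, to the right of which there remain at least $(k+n)-(i+1)\ge n$ factors; hence the corresponding swap is a substitution instance of the $k=2$ identity (substitute the two factors for $x_1,x_2$, group the $\ge n$ remaining factors into $n$ nonempty monomials to be substituted for $x_3,\dots,x_{n+2}$, and left-multiply by the untouched prefix). Chaining these instances --- equality modulo the identities of $A$ being transitive --- yields the desired identity, and the cases $k\le1$ are vacuous.

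It therefore remains to deduce the $k=2$ identity from the hypothesis $f_\sigma$, which is the crux. Since $\sigma(1)\ne1$, put $j:=\sigma^{-1}(1)\ge2$; then $f_\sigma$ may be read as
$$x_1\cdot(x_2\cdots x_n)=A\,x_1\,B,$$
where $A$ is the product of the $j-1\ge1$ variables $x_{\sigma(1)},\dots,x_{\sigma(j-1)}$ (none of which is $x_1$) and $B$ is the product of the remaining $n-j$ of $x_2,\dots,x_n$. Substituting monomials for the $x_i$ and multiplying on either side, this means: whenever a word has $x_1$ followed by at least $n-1$ further letters, one may slide $x_1$ to the right past a nonempty prefix of what follows, while that prefix and the ensuing suffix get shuffled among themselves as $\sigma$ dictates. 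By Corollary~\ref{cor:subgroup}, $\sigma^{-1}$, and indeed every power $\sigma^t$, lies in $H_n$, so $f_{\sigma^{-1}}$ and all $f_{\sigma^t}$ are identities of $A$ as well, which also supplies the ``leftward'' sliding moves. I would then realize the transposition of the first two letters as a short composition of such moves, arranging that the collateral shuffles cancel and that the $n$-letter tail $x_3\cdots x_{n+2}$ is returned to its original order.

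This last step is the main obstacle, and it conceals a genuine subtlety. Applying $\sigma$ to $n$ \emph{single-letter} blocks realizes the \emph{fixed} permutation $\sigma$ of those positions, so the moves of this kind alone generate only a fixed subgroup of each window's symmetric group --- a subgroup of the alternating group when $\sigma$ is even --- and can never produce the odd permutation $(1\,2)$. The way out is to use blocks of unequal sizes: sliding a block of $s$ letters past a block of $t$ letters effects $st$ transpositions, so multi-letter substitutions break the parity constraint, and the bookkeeping must be arranged so that the ``scratch'' letters are nevertheless restored to order. The cleanest packaging of the verification is to check that $(1\,2)$ lies in the group of permutations of the $n+2$ positions generated by the moves that replace $n$ consecutive blocks $b_1\cdots b_n$ by $b_{\sigma^t(1)}\cdots b_{\sigma^t(n)}$, now with the blocks allowed to have arbitrary sizes; this is a finite check, whose only delicate point is to carry it out uniformly in $\sigma$ and in $j=\sigma^{-1}(1)$ rather than example by example. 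None of this uses the characteristic of the ground field, in keeping with the paper's remark that the argument is purely combinatorial.
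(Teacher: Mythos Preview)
Your reduction to the case $k=2$ is correct and matches the paper's implicit strategy: once $(1\,2)\in H_{n+2}$, the bubble-sort argument you describe finishes the theorem. The gap is entirely in the $k=2$ step, which you yourself flag as ``the crux'' and then do not actually prove. You propose to realize $(1\,2)$ as a product of block-permutation moves coming from $f_{\sigma^t}$ with varying block sizes, and you say this is ``a finite check'' to be done ``uniformly in $\sigma$''. But that uniformity is precisely the content of the theorem: for each fixed $\sigma$ one can imagine grinding through a computation, but you give no argument that works for arbitrary $\sigma\in S_n$ with $\sigma(1)\ne 1$, and your own discussion of the parity obstruction shows why a naive search is not guaranteed to succeed.

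The key idea you are missing is a cancellation trick that sidesteps this combinatorics entirely. Writing $x_\sigma=x_iu$ with $i=\sigma(1)>1$, the paper forms the linear combination
\[
z\,f_\sigma(x_1,\ldots,x_n)\;-\;f_\sigma(x_1,\ldots,zx_i,\ldots,x_n)
\;=\;z\,x_1\cdots x_n-x_1\cdots x_{i-1}\,z\,x_i\cdots x_n,
\]
in which the $\sigma$-scrambled terms cancel, leaving a clean degree-$(n{+}1)$ identity with \emph{no} residual shuffle of the tail. This single identity already lets one slide a leading letter $i{-}1$ steps to the right. Three elementary substitutions into it (taking $z=y_1$ with $x_1\mapsto y_2x_1$; then $z=y_2$ with $x_i\mapsto y_1x_i$; then $z=y_2y_1$) immediately give
\[
y_1y_2\,x_1\cdots x_n=y_2y_1\,x_1\cdots x_n,
\]
which is exactly $(1\,2)\in H_{n+2}$. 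This is short, uniform in $\sigma$, and characteristic-free; your group-theoretic packaging is not wrong in spirit, but it trades a two-line cancellation for a search problem you did not solve.
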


For future reference and to display the fact that the proof is characteristic independent, we record the main
ideas of Latyshev's proof. Write $x_\sigma=x_iu$. Thus, compute the identity
\begin{equation}\label{eq:Latyshev}
zf_\sigma(x_1,\ldots,x_n)-f_\sigma(x_1,\ldots,zx_i,\ldots,x_n)=
zx_1\cdots  x_n -x_1\cdots zx_i\cdots x_n
\end{equation}
Next, make three substitutions into~\eqref{eq:Latyshev}. For the first one, let $z=y_1$ and
$x_1=y_2x_1$:
$$y_1y_2x_1\cdots x_n=y_2x_1\cdots y_1x_i\cdots x_n.$$
Then, let $z=y_2$ and $x_i=y_1x_i$:
$$y_2x_1\cdots y_1x_i\cdots x_n=x_1\cdots y_2y_1
\cdots x_n.$$
Finally, let $z=y_2y_1$:
$$y_2y_1 x_1\cdots x_n= x_1\cdots y_2y_1x_i\cdots
x_n.$$
These equations imply that
\begin{equation}y_1y_2x_1\cdots x_n=y_2y_1x_1\cdots x_n
\label{eq:y1y2}\end{equation}
which ensures the mentioned theorem.

\medskip

We make some observations about this theorem. First, the conclusion can be restated as the algebra satisfies $f_{\tau}$, where $\tau$ is considered an element of $S_{n+k}$ fixing the elements of $\{k+1,\ldots,k+n\}$.
Second, that essentially the same argument (or by applying the theorem to $A^{op}$, the opposite algebra)
guarantees that if $\sigma\in S_n$ does not fix~$n$, then $f_\tau$ is a consequence of $f_\sigma$
for all $\tau\in S_{n+k}$ fixing the first $n$ letters $\{1,\ldots,n\}$ as well as analogues of the
above numbered equations. Let $x_\sigma=vx_j$, then
\begin{align}
x_1\ldots x_nz&=x_1\ldots x_jz\ldots x_n\tag{\ref{eq:Latyshev}$^\prime$}
\label{eq:Latyshev'}\\
x_1\ldots x_ny_1y_2&=x_1\ldots x_ny_2y_1\tag{\ref{eq:y1y2}$^\prime$}
\label{eq:y1y2'}.
\end{align}
It is not too difficult to prove from this that an algebra satisfying such an equation will be eventually commutative of degree~$2n+1$, but we are aiming for a smaller bound.

\medskip

It will also useful to translate equations~\eqref{eq:Latyshev}
and~\eqref{eq:y1y2} into the language of $H_k$.

\begin{cor} Let $A$ be an algebra satisfying some $f_\sigma$, where $\sigma\in S_n$, $x_\sigma=x_iu=vx_j$, and where $i\ne1$ and $j\ne n$. Then, $H_{n+1}$ contains the cycles $(12\ldots i)$ and $(j+1\ldots n+1)$; and
$H_{n+2}$ contains the transpositions~$(12)$ and $(n+1,n+2)$.
\label{cor:4identities}
\end{cor}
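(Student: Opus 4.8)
The plan is to recognize the four permutations in the statement as restatements, in the language of the groups $H_k$, of Latyshev's equations~\eqref{eq:Latyshev} and~\eqref{eq:y1y2} together with their opposite-algebra counterparts~\eqref{eq:Latyshev'} and~\eqref{eq:y1y2'}; no new argument is needed, only care with the permutation conventions. Throughout I would write $x_\sigma = x_i u = v x_j$, so that $\sigma(1) = i \ne 1$ and $\sigma(n) = j \ne n$, which is exactly the situation in which those equations are available.

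For the first generator of $H_{n+1}$, equation~\eqref{eq:Latyshev} asserts that $z x_1 \cdots x_n = x_1 \cdots x_{i-1} z x_i \cdots x_n$ holds identically in $A$. This is a multilinear identity of degree $n+1$; relabeling its $n+1$ variables so that the left-hand side becomes the identity monomial $x_{\id}$, it takes the form $x_{\id} = x_\tau$ with $\tau$ the cyclic shift $(12\ldots i)$, so that $(12\ldots i) \in H_{n+1}$. Treating the opposite-algebra equation~\eqref{eq:Latyshev'}, $x_1 \cdots x_n z = x_1 \cdots x_j z x_{j+1} \cdots x_n$, the same way --- this time relabeling so that the \emph{right}-hand side becomes $x_{\id}$ --- yields $x_{\id} = x_\tau$ with $\tau = (j+1\ldots n+1)$, hence $(j+1\ldots n+1) \in H_{n+1}$. (Relabeling instead so that the left-hand side of~\eqref{eq:Latyshev'} becomes $x_{\id}$ returns the inverse cycle $(j+1\ldots n+1)^{-1}$, which lies in $H_{n+1}$ all the same, since $H_{n+1}$ is a subgroup by Corollary~\ref{cor:subgroup}.)

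The two generators of $H_{n+2}$ emerge even more directly. Equation~\eqref{eq:y1y2} reads $y_1 y_2 x_1 \cdots x_n = y_2 y_1 x_1 \cdots x_n$; relabeling $y_1, y_2, x_1, \ldots, x_n$ as $x_1, \ldots, x_{n+2}$ in order, this is precisely $f_{(12)} = 0$, so $(12) \in H_{n+2}$. Likewise the opposite equation~\eqref{eq:y1y2'} relabels to $f_{(n+1,n+2)} = 0$, giving $(n+1,n+2) \in H_{n+2}$.

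The only point demanding attention is keeping the direction of the cycles straight --- in particular, that the most natural relabeling of~\eqref{eq:Latyshev'} returns $(j+1\ldots n+1)^{-1}$ rather than $(j+1\ldots n+1)$ itself --- but since each $H_k$ is a group this causes no difficulty. I therefore expect no substantial obstacle: the corollary is essentially a dictionary translation of facts already in hand.
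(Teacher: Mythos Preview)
Your proposal is correct and follows essentially the same approach as the paper: both translate equations~\eqref{eq:Latyshev}, \eqref{eq:y1y2}, \eqref{eq:Latyshev'}, and~\eqref{eq:y1y2'} into membership statements for $H_{n+1}$ and $H_{n+2}$ via the obvious relabeling of variables. Your remark about the cycle direction in the~\eqref{eq:Latyshev'} step is in fact a point the paper glosses over (its substitution $z=x_{n+1}$ also yields the inverse cycle), and your appeal to Corollary~\ref{cor:subgroup} to resolve it is exactly right.
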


\begin{proof} In \eqref{eq:Latyshev}, we set $z=x_1$ and every $x_i$ to $x_{i+1}$ to get $(12\ldots i)\in H_{n+1}$; and in~\eqref{eq:y1y2} set $y_1=x_1,\  y_2=x_2$ and each $x_i$ to $x_{i+2}$ to get $(12)\in H_{n+2}$. For the other two, we set $z=x_{n+1}$ in~\eqref{eq:Latyshev'} and $y_1=x_{n+1},\ y_2=x_{n+2}$ in \eqref{eq:y1y2'} (or just use $A^{op}$), deducing the claim.
\end{proof}

\begin{lem} With notation as in the above corollary, $H_{n+2}$ contains all permutations of $\{1,2,\ldots,i+1\}$ and of $\{j+1,\ldots,n+2\}$.\label{lem:2.10}
\end{lem}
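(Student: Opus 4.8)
The plan is to bootstrap from Corollary~\ref{cor:4identities}, which already gives us four explicit elements of the relevant subgroups: in $H_{n+1}$ we have the cycles $(12\ldots i)$ and $(j+1\ldots n+1)$, and in $H_{n+2}$ we have the transpositions $(12)$ and $(n+1,n+2)$. Since $H_{n+2}$ is a subgroup (Corollary~\ref{cor:subgroup}), it suffices to produce inside it enough generators of $\mathrm{Sym}(\{1,\ldots,i+1\})$ and of $\mathrm{Sym}(\{j+1,\ldots,n+2\})$. By the standard fact that a symmetric group $S_m$ on consecutive letters is generated by the single cycle $(12\ldots m)$ together with a transposition of two adjacent letters (say $(12)$), I would aim to realize both ingredients for the block $\{1,\ldots,i+1\}$: I already have $(12)\in H_{n+2}$, so I only need the $(i+1)$-cycle $(12\ldots i{+}1)\in H_{n+2}$.

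The key step is to upgrade the $i$-cycle $(12\ldots i)\in H_{n+1}$ to the $(i{+}1)$-cycle $(12\ldots i{+}1)\in H_{n+2}$. This is exactly what the operators $T_\ell$ are designed to do — they pass from identities of degree $k$ to identities of degree $k+1$, and by Theorem~\ref{thm:Consequences} applying the $T_\ell$ to elements of $H_{n+1}$ lands in $H_{n+2}$. Concretely, viewing the $i$-cycle as the identity $x_1\cdots x_{n+1} = x_2x_3\cdots x_i x_1 x_{i+1}\cdots x_{n+1}$, I would apply the splitting $T_i$ (substituting $x_i \mapsto x_i x_{i+1}$ and reindexing), which inserts a new letter right after $x_i$ in the identity block; tracking indices via Lemma~\ref{lem:2.4} should yield precisely $(12\ldots i{+}1)\in H_{n+2}$. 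Alternatively, one can multiply: since $T_0$ and $T_{n+1}$ (left/right multiplication by a fresh variable) also produce elements of $H_{n+2}$ from $H_{n+1}$, I can get $(12\ldots i)$ viewed inside $S_{n+2}$ fixing $n+2$, and then compose with other available elements. The mirror-image argument (or passing to $A^{op}$) handles the block $\{j+1,\ldots,n+2\}$ from $(j+1\ldots n+1)\in H_{n+1}$ and $(n+1,n+2)\in H_{n+2}$ identically.

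Once I have, inside $H_{n+2}$, an $(i{+}1)$-cycle on $\{1,\ldots,i{+}1\}$ and the transposition $(12)$, closure under the group operation (Corollary~\ref{cor:subgroup}) immediately gives all of $\mathrm{Sym}(\{1,\ldots,i{+}1\})$; symmetrically for $\mathrm{Sym}(\{j+1,\ldots,n{+}2\})$. Note these two blocks are disjoint precisely because $i < j$ (which follows from $x_\sigma = x_i u = v x_j$ with $i \ne 1$, $j \ne n$, and $\sigma$ not being the identity on the relevant positions), so the two symmetric groups commute and $H_{n+2}$ contains the direct product $\mathrm{Sym}(\{1,\ldots,i{+}1\}) \times \mathrm{Sym}(\{j+1,\ldots,n{+}2\})$, which is exactly the claim.

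The main obstacle I anticipate is purely bookkeeping: getting the index arithmetic in Lemma~\ref{lem:2.4} exactly right so that the $T_\ell$-image of the $i$-cycle is the $(i{+}1)$-cycle and not some conjugate or a cycle of the wrong length. The conjugating factors $\hs_{\ell+1}$ and $\hs_{\sigma^{-1}(\ell)+1}^{-1}$ make it easy to be off by one or to land on a permutation that fixes the wrong letter, so I would verify the computation on a small explicit case (say $i=2$ or $i=3$) before asserting the general pattern. The group-theoretic part — that $S_m$ on consecutive letters is generated by a full cycle and an adjacent transposition — is entirely standard and can be cited or dispatched in a sentence.
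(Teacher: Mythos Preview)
Your approach is essentially identical to the paper's: apply $T_i$ to the identity $f_{(12\ldots i)}$ in degree $n+1$ to obtain $(12\ldots i{+}1)\in H_{n+2}$, then combine with $(12)\in H_{n+2}$ from Corollary~\ref{cor:4identities} and invoke Corollary~\ref{cor:subgroup} to get all of $\mathrm{Sym}(\{1,\ldots,i{+}1\})$; the right-hand block is symmetric. One caution: your final paragraph asserts that the two blocks $\{1,\ldots,i{+}1\}$ and $\{j{+}1,\ldots,n{+}2\}$ are disjoint because $i<j$, but this is false in general---here $i=\sigma(1)$ and $j=\sigma(n)$, and e.g.\ for the reversal $\pi_n$ one has $i=n$, $j=1$. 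The lemma only claims that $H_{n+2}$ contains each symmetric subgroup separately, not their direct product, so simply drop that remark.
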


\begin{proof} Let $\tau=(12\ldots i)\in S_{n+1}$. By virtue of Corollary~\ref{cor:4identities}, $A$ satisfies the identity $f_\tau$, which equals
$$x_{\id}-x_2x_3\cdots x_ix_1x_{i+1}\cdots x_{n+1}.$$
Applying the operator $T_i$ yields the identity
$$f_{\id}-x_2x_3\cdots x_{i+1}x_1x_{i+2}\cdots x_{n+2},$$
which shows that $(12\ldots i+1)$ is in $H_{n+2}$. On the other hand, $H_{n+2}$ contains as well the transposition $(12)$, so Corollary~\ref{cor:subgroup} allows us to deduce that $H_{n+2}$ will contain the subgroup they generate, which is the symmetric group on $\{1,2,\ldots, i+1\}$. The other case is similar, concluding the proof.
\end{proof}

This suggests the following definition.

\begin{defn}
For $n\ge a,b$, let $S(n;a,b)$ be the subgroup of $S_n$ generated by the permutations of the first~$a$ letters and the permutations of the last~$b$ letters.
\end{defn}

Note that, if $a+b>n$, then $S(n;a,b)=S_n$ and otherwise $S(n;a,b)$ is a proper subgroup.

\begin{lem} If the transposition $(ij)$ is in $H_k$, then the transpositions $(ij)$ and $(i+1,j+1)$ are in $H_{k+1}$.\label{lem:2.13}
\end{lem}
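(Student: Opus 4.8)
The plan is to produce both transpositions directly, by applying the operators $T_{k+1}$ and $T_0$ to the identity $f_{(ij)}$ and using the standard fact (made explicit in the proof of Theorem~\ref{thm:Consequences}) that any consequence of an identity of $A$ is again an identity of $A$. Throughout we take $1\le i<j\le k$, so that $(ij)\in S_k$ and both $(ij)$ and $(i+1,j+1)$ are genuine transpositions of $S_{k+1}$.

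For the first transposition, observe that since $(ij)\in H_k$ the polynomial $f_{(ij)}(x_1,\ldots,x_k)$ is an identity of $A$, hence so is
$$T_{k+1}\bigl(f_{(ij)}\bigr)=f_{(ij)}(x_1,\ldots,x_k)\,x_{k+1}=x_1\cdots x_{k+1}-x_1\cdots x_{i-1}\,x_j\,x_{i+1}\cdots x_{j-1}\,x_i\,x_{j+1}\cdots x_{k+1}.$$
This is precisely $f_{(ij)}$ read in $S_{k+1}$, with $(ij)$ viewed as the transposition of $S_{k+1}$ fixing $k+1$; therefore $(ij)\in H_{k+1}$.

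For the second transposition, prepend a variable instead: the polynomial $T_0\bigl(f_{(ij)}\bigr)=x_1\,f_{(ij)}(x_2,\ldots,x_{k+1})$ is again a consequence of $f_{(ij)}$, hence an identity of $A$. Since the $i$th and $j$th letters of the word $x_2\cdots x_{k+1}$ are $x_{i+1}$ and $x_{j+1}$, this gives
$$T_0\bigl(f_{(ij)}\bigr)=x_1\cdots x_{k+1}-x_1\cdots x_i\,x_{j+1}\,x_{i+2}\cdots x_j\,x_{i+1}\,x_{j+2}\cdots x_{k+1},$$
which is exactly $f_{(i+1,j+1)}$ in $S_{k+1}$; therefore $(i+1,j+1)\in H_{k+1}$.

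The only point requiring care is the index bookkeeping in the last step: relabelling the positions of $x_2,\ldots,x_{k+1}$ as $1,\ldots,k$ converts the positional transposition $(ij)$ into the swap of the variables $x_{i+1}$ and $x_{j+1}$, i.e.\ into $(i+1,j+1)$ as a permutation of indices. One could alternatively read both statements off from Lemma~\ref{lem:2.4} applied to $\sigma=(ij)$, but the direct substitution is cleaner. Degenerate configurations such as $j=i+1$, $i=1$, or $j=k$ are handled uniformly by interpreting the empty products in the displayed monomials as empty.
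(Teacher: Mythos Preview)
Your proof is correct and follows exactly the same approach as the paper: multiplying $f_{(ij)}$ on the right by $x_{k+1}$ to get $(ij)\in H_{k+1}$, and multiplying on the left by $x_1$ (with the shift of variables) to get $(i+1,j+1)\in H_{k+1}$. Your write-up is simply a more detailed version of the paper's one-line argument, with the $T_0$ and $T_{k+1}$ notation made explicit.
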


\begin{proof} These correspond to $f_{(ij)}(x_1,\ldots,x_k)x_{k+1}$ and $x_1f_{(ij)}(x_2,\ldots,x_{n+1})$, respectively, which concludes the proof.
\end{proof}

\begin{lem} If $S(k;a,b)\subseteq H_k$, then $S(n+1;a+1,b+1)\subseteq
H_{k+1}$\label{lem:increasing n}
\end{lem}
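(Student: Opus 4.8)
Reading the conclusion as $S(k+1;a+1,b+1)\subseteq H_{k+1}$, the plan is to reduce everything to the adjacent-transposition generators of these groups. Recall that $S(k;a,b)$ is generated, as a subgroup of $S_k$, by the adjacent transpositions $(1,2),\ldots,(a-1,a)$ — which together generate the symmetric group on the first $a$ letters — and by $(k-b+1,k-b+2),\ldots,(k-1,k)$ — which together generate the symmetric group on the last $b$ letters $\{k-b+1,\ldots,k\}$. By hypothesis, all of these transpositions lie in $H_k$.

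Next I would push each of these generators up one degree using Lemma~\ref{lem:2.13}: if a transposition $(i,i+1)$ lies in $H_k$, then both $(i,i+1)$ and $(i+1,i+2)$ lie in $H_{k+1}$. Applying this to $(1,2),\ldots,(a-1,a)$ places into $H_{k+1}$ the full list $(1,2),(2,3),\ldots,(a,a+1)$; applying it to $(k-b+1,k-b+2),\ldots,(k-1,k)$ likewise places into $H_{k+1}$ the full list $(k-b+1,k-b+2),\ldots,(k,k+1)$. The point worth isolating is that the ``shift by one'' built into Lemma~\ref{lem:2.13} supplies exactly the one new adjacent transposition needed at each end — namely $(a,a+1)$ on the left and $(k,k+1)$ on the right — to enlarge each of the two symmetric groups by a single additional letter.

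Finally, by Corollary~\ref{cor:subgroup} the set $H_{k+1}$ is a subgroup of $S_{k+1}$, so it contains the subgroup generated by all the transpositions produced above. The family $(1,2),(2,3),\ldots,(a,a+1)$ generates the symmetric group on $\{1,\ldots,a+1\}$, and the family $(k-b+1,k-b+2),\ldots,(k,k+1)$ generates the symmetric group on the last $b+1$ letters $\{k-b+1,\ldots,k+1\}$, so the subgroup they jointly generate is precisely $S(k+1;a+1,b+1)$; hence $S(k+1;a+1,b+1)\subseteq H_{k+1}$, as desired. (As usual, if $a+b>k$ then $S(k;a,b)=S_k$ and the same computation simply yields $H_{k+1}=S_{k+1}$, which is consistent since then $(a+1)+(b+1)>k+1$.)

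There is no real obstacle here: the argument is bookkeeping built on top of Lemma~\ref{lem:2.13} and the subgroup property of $H_{k+1}$. The only places demanding care are getting the index ranges right — distinguishing the last $b$ letters $\{k-b+1,\ldots,k\}$ of $[k]$ from the last $b+1$ letters $\{k-b+1,\ldots,k+1\}$ of $[k+1]$ — and checking that it is genuinely the upward shift furnished by Lemma~\ref{lem:2.13} (rather than a hypothetical downward one) that produces the missing generators $(a,a+1)$ and $(k,k+1)$ at the two ends; here one uses that, via the operators $T_i$ and $T_{k}$, Lemma~\ref{lem:2.13} moves a transposition both by leaving it in place and by shifting it one step to the right.
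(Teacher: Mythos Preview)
Your proof is correct and follows essentially the same approach as the paper: both arguments observe that the adjacent transpositions generating $S(k;a,b)$ lie in $H_k$, apply Lemma~\ref{lem:2.13} to obtain the corresponding (and one additional) adjacent transpositions in $H_{k+1}$, and conclude via Corollary~\ref{cor:subgroup} that these generate $S(k+1;a+1,b+1)$. Your version is in fact more careful with the index bookkeeping than the paper's own proof, which contains some stray variable names.
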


\begin{proof} One sees that $H_k$ contains all of the transpositions $(i,i+1)$, where either $i+1\le a$ or $i\ge n-j+1$. With the previous lemma at hand, $H_{k+1}$ will contain all $(ij)$, where either $i+1\le a+1$ or $i\ge n-j+1$.  These, however, generate the subgroups of all permutations of the first $a+1$ letters and of the last $b+1$ letters, respectively, as needed.
\end{proof}

The following is the last main ingredient of our proof.

\begin{lem} If $A$ satisfies $f_\sigma$, where $\sigma\in S_n$ fixes neither~1 nor $n$, then $H_{n+2}$ contains $S(n+2;4,4)$.
\label{lem:S44}
\end{lem}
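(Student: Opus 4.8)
The plan is to bootstrap from Lemma~\ref{lem:2.10}, which already puts $S(n+2;\sigma(1)+1,\,n+2-\sigma(n))$ inside $H_{n+2}$, and to push both parameters up to $4$. By definition $S(n+2;4,4)$ is generated by the symmetric group on $\{1,2,3,4\}$ together with the symmetric group on $\{n-1,n,n+1,n+2\}$, so it suffices to place each of these two subgroups in $H_{n+2}$. If $\sigma(1)\ge3$ the first is already given by Lemma~\ref{lem:2.10}, and if $\sigma(n)\le n-2$ the second is; moreover passing to $A^{op}$, whose defining permutation $\hat\sigma$ satisfies $\hat\sigma(1)=n+1-\sigma(n)$, interchanges the two ends. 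Thus the whole lemma reduces to one assertion: \emph{if $\sigma(1)=2$, then the symmetric group on $\{1,2,3,4\}$ lies in $H_{n+2}$.}

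So assume $\sigma(1)=2$. Corollary~\ref{cor:4identities} (with $i=\sigma(1)=2$) gives $(1\,2)\in H_{n+1}$, hence $(1\,2),(2\,3)\in H_{n+2}$ by Lemma~\ref{lem:2.13}. Since $(1\,2)$ and $(2\,3)$ together with either $(3\,4)$ or the $3$-cycle $(1\,4\,2)$ generate the whole symmetric group on $\{1,2,3,4\}$, I only need to produce one further such element of $H_{n+2}$, and I would split on the value of $\sigma(2)$. If $\sigma(2)=:i_2\ne1$, then $i_2\ge3$ and $\sigma^2\in H_n$ with $\sigma^2(1)=i_2\ne1$; running the Latyshev computation~\eqref{eq:Latyshev} with $f_{\sigma^2}$ in place of $f_\sigma$ (it uses only that the permutation does not fix $1$) and specializing $z=x_1$ with the index shift, exactly as in the proof of Corollary~\ref{cor:4identities}, yields $(1\,2\,\cdots\,i_2)\in H_{n+1}$. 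Together with $(1\,2)\in H_{n+1}$ this generates the symmetric group on $\{1,\dots,i_2\}$, which contains $(2\,3)$; so $(2\,3)\in H_{n+1}$, and then $(3\,4)\in H_{n+2}$ by Lemma~\ref{lem:2.13}.

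The remaining, and main, difficulty is the sub-case $\sigma(2)=1$. Here $\sigma=(1\,2)\rho$ with $\rho$ a permutation of $\{3,\dots,n\}$; since $\sigma$ does not fix $n$, $\rho$ moves $n$, so $n\ge4$. Now powers of $\sigma$ are useless — they move $1$ only within $\{1,2\}$ — so one is forced to work at degree $n+2$ and to exploit the operators $T_i$. The idea is to find two $T$-consequences whose ``interior'' copies of $\rho$ coincide, so that they cancel upon division. A direct computation (composing the substitutions $x_1\mapsto x_1x_2x_3$, $x_k\mapsto x_{k+2}$ for $k\ge2$ in the first instance; shifting every $x_k$ up by $2$ and prepending $x_1x_2$ in the second) gives
$$T_1\!\bigl(T_1(f_\sigma)\bigr)=f_\mu,\qquad T_0\!\bigl(T_0(f_\sigma)\bigr)=f_\tau,$$
with $\mu=(1\,4\,3\,2)\,\rho'$ and $\tau=(3\,4)\,\rho'$, where $\rho'$ denotes the copy of $\rho$ acting on $\{5,\dots,n+2\}$. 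Both $\mu,\tau\in H_{n+2}$, which is a group by Corollary~\ref{cor:subgroup}; and since $\rho'$ has support disjoint from $\{1,2,3,4\}$ it commutes with $(1\,4\,3\,2)$ and $(3\,4)$ and cancels, so $\mu\tau^{-1}=(1\,4\,3\,2)(3\,4)=(1\,4\,2)\in H_{n+2}$. With $(1\,2),(2\,3)\in H_{n+2}$ this yields the symmetric group on $\{1,2,3,4\}$, finishing the proof. I expect the bookkeeping in this last step — pinning down precisely which compositions of $T_i$'s leave behind the \emph{same} conjugate copy of $\rho$, so that dividing produces a clean short permutation rather than something entangled with $\rho$ — to be the delicate part of the argument; the choice of $T_1^2$ against $T_0^2$ is the crux.
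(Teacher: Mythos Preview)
Your argument is correct, and the overall architecture---reduce by symmetry to the ``left end,'' dispose of $\sigma(1)\ge 3$ via Lemma~\ref{lem:2.10}, then split $\sigma(1)=2$ on whether $\sigma(2)=1$---is exactly the paper's. In the sub-case $\sigma(2)\ne 1$ the paper is slightly more direct: since $\sigma^2(1)\ge 3$, it simply invokes Lemma~\ref{lem:2.10} for $\sigma^2$ to place all of $S_{\{1,\ldots,\sigma^2(1)+1\}}$ inside $H_{n+2}$, rather than rebuilding that conclusion from Corollary~\ref{cor:4identities} and Lemma~\ref{lem:2.13} as you do. The genuine divergence is in the hard sub-case $\sigma(1)=2$, $\sigma(2)=1$. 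The paper stays at degree $n+1$ and runs a Latyshev-style cancellation: writing $x_\sigma=x_2x_1x_iu$ (so $i=\sigma(3)\ge 3$), the difference $f_\sigma(x_1z,x_2,\ldots,x_n)-f_\sigma(x_1,\ldots,zx_i,\ldots,x_n)$ collapses to $x_1zx_2\cdots x_n-x_1\cdots zx_i\cdots x_n$, which after the usual relabeling gives $(2\,3\,\cdots\,i)\in H_{n+1}$; together with $(1\,2)\in H_{n+1}$ this already yields $S_{\{1,\ldots,i\}}\subseteq H_{n+1}$, and one then lifts to $H_{n+2}$. Your route instead works directly at degree $n+2$ with the $T_i$-calculus, computing $T_1^2(f_\sigma)=f_{(1432)\rho'}$ and $T_0^2(f_\sigma)=f_{(34)\rho'}$ and dividing in the group $H_{n+2}$ to kill the common $\rho'$. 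Both approaches are short; the paper's has the minor advantage of producing a stronger intermediate statement (a full $S_i$ already in $H_{n+1}$), while yours has the virtue of using only the formally defined operators $T_i$ and the group structure of $H_{n+2}$, with no ad~hoc polynomial manipulation.
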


\begin{proof} We concentrate on the proof that, if $\sigma(1)\ne 1$, then $H_{n+4}$ contains all permutations of $\{1,2,3,4\}$; the proof that it contains the permutations of the last four letters is similar. There are three
cases to consider.

If $\sigma(1)\ge 3$, then the lemma follows from Lemma~\ref{lem:2.10}. Since $\sigma(1)\ne 1$, we henceforth assume $\sigma(1)=2$.

If $\sigma^2(1)=\sigma(2)\ne 1$, then $\sigma^2(1)\ge 3$. Since $f_{\sigma^2}$ is an identity, we are again done by Lemma~\ref{lem:2.10}.

Finally, we consider the case of $\sigma(1)=2$ and $\sigma(2)=1$. Note that Corollary~\ref{cor:4identities} reaches us that $(12)\in H_{n+1}$. Let $x_\sigma=x_2x_1x_iu$, so $f_\sigma=x_{\id}-x_2x_1x_iu$. Following in the footsteps of Latytyshev, we compute
$$f_\sigma(x_1z,x_3,\ldots,x_{n})-f_\sigma(x_1,\ldots,zx_i,
\ldots,x_{n})=x_1yx_2\cdots x_n-x_1\cdots zx_i\cdots x_n.$$
We now substitute $x_2$ for $z$ and $x_{j+1}$ for $x_j$, for all $j\ge 2$. The result is
$$x_{\id}-x_1 x_3x_4\cdots x_ix_2x_{i+1}\cdots x_{n+1}=f_\tau,$$
where $\tau=(23\ldots i)$. Hence, $H_{n+1}$ contains the cycle $\tau=(23\ldots i)$. However, since it contains $(12)$, it follows that it contains the product $(12)\tau=(12\ldots i)$ too. Since $(12)$ and $(12\ldots i)$ generate the permutation group $S_i$, all permutations of $\{1,2,\ldots,i\}$ are in $H_{n+1}$, and since $i\ge 3$ the
arguments follow from Lemma~\ref{lem:2.13}.
\end{proof}

We now can attack our chief result in this section.

\begin{thm} If an algebra $A$ satisfies $f_\sigma$ for some $\sigma\in S_n$ fixing neither~1 nor~$n$, then
if $n\ge 5$, the algebra $A$ is eventually commutative of degree $2n-3$, and if $n=3$ or $n=4$, then $A$ is eventually commutative of degree~$n+1$.
\label{thm:main}
\end{thm}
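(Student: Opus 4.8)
The plan is to split the argument into the cases $n\ge5$ and $n\in\{3,4\}$, the first being a clean consequence of the machinery already in place and the second requiring a small amount of extra hand computation.

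For $n\ge5$ the argument is short. Lemma~\ref{lem:S44} gives $S(n+2;4,4)\subseteq H_{n+2}$; feeding this $t$ times into Lemma~\ref{lem:increasing n} yields $S(n+2+t;\,4+t,\,4+t)\subseteq H_{n+2+t}$. Taking $t=n-5$ (legitimate precisely because $n\ge5$) gives $S(2n-3;\,n-1,\,n-1)\subseteq H_{2n-3}$, and since $(n-1)+(n-1)=2n-2>2n-3$ we have $S(2n-3;n-1,n-1)=S_{2n-3}$. Hence $H_{2n-3}=S_{2n-3}$, i.e.\ $A$ is eventually commutative of degree $2n-3$; eventual commutativity of all higher degrees then follows (as noted in the introduction, or by one more application of Lemma~\ref{lem:increasing n} at each step).

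For $n=3$ or $n=4$ the goal becomes $H_{n+1}=S_{n+1}$. The useful observation here is that $S(n+1;3,3)=S_{n+1}$ whenever $n\le4$, since then $3+3=6>n+1$; so it suffices to show that $H_{n+1}$ contains every permutation of $\{1,2,3\}$ and every permutation of $\{n-1,n,n+1\}$, and by passing to $A^{\mathrm{op}}$ — which satisfies a two-term identity for a permutation again fixing neither $1$ nor $n$ — it is enough to prove the former. Corollary~\ref{cor:4identities} already puts the cycle $(1\,2\cdots\sigma(1))$ into $H_{n+1}$; applying it also to the powers $\sigma^2,\sigma^3,\dots$ (whenever these move $1$) and to the right-hand end supplies further cycles, and when $\sigma(1)=2$ we additionally get the transposition $(1\,2)\in H_{n+1}$. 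For all but a handful of $\sigma$, two of these permutations already generate a subgroup containing $S_{\{1,2,3\}}$, which one checks case by case. The exceptions are the degenerate permutations — for $n=3$ essentially just $\sigma=(1\,3)$, and for $n=4$ a short explicit list — for which the corollary returns only permutations lying in $A_{n+1}$ (or a smaller subgroup); for these I would compute one or two of the polynomials $T_i(f_\sigma)$, imitating the third case of the proof of Lemma~\ref{lem:S44} (Latyshev's substitution trick), to exhibit an odd permutation in $H_{n+1}$ and again force $S_{\{1,2,3\}}\subseteq H_{n+1}$. As $\sigma$ ranges over a finite explicit set, this is a finite verification.

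I expect the main obstacle to be exactly this last finite verification. For ``generic'' $\sigma$, Corollary~\ref{cor:4identities} is abundant, but for the transposition-type permutations (such as $(1\,n)$) it can hand back only cycles that on their own generate no more than a cyclic or an alternating group, and one must locate the right consequence $T_i(f_\sigma)$ — or the right Latyshev-style substitution — that supplies the missing odd permutation at level $n+1$. Everything else is bookkeeping with the subgroups $S(k;a,b)$ together with repeated use of Lemma~\ref{lem:increasing n}.
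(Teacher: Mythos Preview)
Your argument for $n\ge5$ is exactly the paper's: invoke Lemma~\ref{lem:S44} to get $S(n+2;4,4)\subseteq H_{n+2}$, iterate Lemma~\ref{lem:increasing n}, and set the parameter to $n-5$ so that the two blocks overlap in $S_{2n-3}$. For $n=3,4$ the paper simply leaves the verification to the reader (pointing to the material in Section~\ref{sec:special} as a shortcut), whereas you sketch a direct check via $S(n+1;3,3)=S_{n+1}$ and Corollary~\ref{cor:4identities}; this is a reasonable elaboration rather than a different approach.
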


\begin{proof}
We leave the cases of $n=3$ or $n=4$ to the reader, remarking that the material in Section~\ref{sec:special} can be used as a shortcut.

Furthermore, appealing to Lemma \ref{lem:S44}, it follows that $S(n+2;4,4)\subseteq H_{n+2}$. Then, by iterated use of Lemma~\ref{lem:increasing n}, one observes that $S(n+2+b;4+b)\subseteq H_{n+2+b}$. Finally, taking $b=n-5$ gives
$S(2n-3;n-1,n-1)\subseteq H_{2n-3}$, but since $n-1+n-1>2n-3$, we derive $S(2n-3;n-1,n-1)=S_{2n-3}$, as desired.
\end{proof}

We now show that the bound of $2n-3$ is sharp by exhibiting a permutation $\sigma\in S_n$ which implies eventual commutativity of degree $2n-3$, but of no lower degree. To shorten notation, we denote $S(n;a,a)$ as $S(n;a)$.

\begin{lem}
Let $\sigma\in S(n;a)$ and $n\ge 2a+1$. Then, $T_i(\sigma)$ belongs to $S(n+1;a+1)$, for all~$i$.
\end{lem}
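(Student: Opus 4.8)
The plan is to identify $T_i(\sigma)$ explicitly as a permutation word and then verify, by a short case analysis on $i$, that it has the block structure characterising membership in $S(n+1;a+1)$. The first step is to record those block structures. Since $n\ge 2a+1$, the first $a$ letters $\{1,\dots,a\}$ and the last $a$ letters $\{n-a+1,\dots,n\}$ are disjoint and separated by the nonempty block $\{a+1,\dots,n-a\}$, so $\rho\in S(n;a)$ exactly when $\rho$ stabilises $\{1,\dots,a\}$ and $\{n-a+1,\dots,n\}$ setwise and fixes $\{a+1,\dots,n-a\}$ pointwise; likewise $\rho'\in S(n+1;a+1)$ exactly when $\rho'$ stabilises $\{1,\dots,a+1\}$ and $\{n-a+1,\dots,n+1\}$ setwise and fixes $\{a+2,\dots,n-a\}$ pointwise. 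If $a\le 1$ then $S(n;a)$ is trivial, and if $n=2a+1$ then $S(n+1;a+1)=S_{n+1}$; in either case the conclusion is automatic, so I would set these aside and assume $a\ge2$, $n\ge 2a+2$.

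The second step is to describe $T_i$ combinatorially. For $1\le i\le n$, $T_i$ substitutes $x_i\mapsto x_ix_{i+1}$ into each variable slot of $f_\sigma$; on the word $(\sigma(1),\dots,\sigma(n))$ this replaces the entry equal to $i$ by the consecutive pair $i,i+1$ and adds $1$ to every entry exceeding $i$, yielding the length-$(n+1)$ word of $\tau=T_i(\sigma)$ (this is Lemma~\ref{lem:2.4} rephrased). For $i=0$ the effect is to prepend the entry $1$ and add $1$ to every entry, and for $i=n+1$ it is to append the entry $n+1$.

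The third step is the case analysis. The cases $n-a+1\le i\le n$ and $i=n+1$ are, under the opposite-algebra symmetry that exchanges the two end blocks, the mirror images of the cases $1\le i\le a$ and $i=0$, so it is enough to treat $i=0$, the range $1\le i\le a$, and the self-symmetric range $a+1\le i\le n-a$. If $i=0$, or more generally $1\le i\le a$, the entry $i$ lies in the first block, so that block grows from $a$ to $a+1$ entries whose value set is exactly $\{1,\dots,a+1\}$, while the middle and last blocks shift one place to the right with all values increased by $1$; reading positions against values then shows $\tau$ stabilises $\{1,\dots,a+1\}$, fixes $\{a+2,\dots,n-a+1\}$ pointwise, and stabilises $\{n-a+1,\dots,n+1\}$, so $\tau\in S(n+1;a+1)$. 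If $a+1\le i\le n-a$, the entry $i$ sits at position $i$ inside the pointwise-fixed middle block, so the new word carries the original permutation of $\{1,\dots,a\}$ on positions $1,\dots,a$, the identity on positions $a+1,\dots,n-a+1$, and a shifted copy of the last block on positions $n-a+2,\dots,n+1$; again $\tau\in S(n+1;a+1)$ (in fact $\tau\in S(n+1;a,a)$).

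I do not expect a genuine obstacle, as the content is bookkeeping, but two points need care. First, the boundary values $i=a$ and $i=a+1$: for $i=a$ one must confirm that after the expansion the first block still ends exactly at position $a+1$ so that the incremented middle block begins at $a+2$, and for $i=a+1$ that the identity segment of the new word still begins at position $a+1$. Second, it is worth noting explicitly that the interval $\{a+2,\dots,n-a\}$ that must be fixed for membership in $S(n+1;a+1)$ is properly contained in the interval $\{a+1,\dots,n-a+1\}$ of positions that $T_i(\sigma)$ actually fixes, so the pointwise-fixing requirement holds with room to spare. Disposing of $a\le1$ and $n=2a+1$ at the start keeps the main argument clean.
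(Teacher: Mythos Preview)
Your approach is correct and amounts to the same elementary bookkeeping as the paper's proof, just organised differently. The paper evaluates $T_i(\sigma)(k)$ via the formula $T_i(\sigma)=\hat s_{i+1}\sigma\hat s_{\sigma^{-1}(i)+1}^{-1}$ from Lemma~\ref{lem:2.4}, splitting primarily on whether $k$ lies in the first, middle, or last block and secondarily on $i$; you instead describe $T_i$ directly as ``replace the entry $i$ in the one-line word by $i,i+1$ and increment entries above $i$'' and split primarily on which block contains $i$. Your word-level description is perhaps a bit more visual, while the paper's computation is more mechanical; both reach the same block-structure conclusion.

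One slip to flag: your claim that $n=2a+1$ forces $S(n+1;a+1)=S_{n+1}$ is false. In that case $(a+1)+(a+1)=n+1$, so the two end blocks are disjoint and exhaust $\{1,\dots,n+1\}$, giving $S(n+1;a+1)\cong S_{a+1}\times S_{a+1}\subsetneq S_{n+1}$ (the paper's criterion is $a+b>n$, strictly). Fortunately this does not damage your argument: your main case analysis already covers $n=2a+1$ without change (the middle block of $\sigma$ is then the single point $\{a+1\}$, and each of your cases still produces a $\tau$ that stabilises the first and last $a+1$ letters). Simply drop the attempted shortcut for $n=2a+1$ and let the main argument handle it.
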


\begin{proof}
We first show that, if $k\le a+1$, then $T_i(\sigma)(k)\le a+1$. So, Lemma~\ref{lem:2.4} applies to get that
$$T_i(\sigma)(k)=\hs_{i+1}\sigma\hs_{j+1}^{-1}(k),$$
where $j=\sigma^{-1}(i)$. Thus, $\hs_{j+1}^{-1}(k)$ will be either $k,\ k-1$ or $n+1$. In the former two cases,
$\sigma\hs_{j+1}^{-1}(k)$ will be less than or equal to $a$, and so $\hs_{i+1}$ acting on $\sigma\hs_{j+1}^{-1}(k)$ will be less than or equal to $a+1$. In the case of $\hs_{j+1}^{-1}(k)=n+1$, we have $k=j+1$, and so $j\le a$ and, likewise, $i=\sigma(j)\le a$. Therefore, $\hs_{i+1}\sigma(n+1)=\hs_{i+1}(n+1)=i+1\le a+1$. The case
of $k\ge n-a+1$ is similar.

Furthermore, we need to show that, if $a+2\le k\le n-a$, then $T_i(\sigma)(k)$ is equal to $k$. There are three possible cases. First, consider $i\le a$. Then, $j$ is also less than or equal to $a$, and $i+1$ and $j+1$ are less than or equal to $a+1$, and so are less than~$k$. Hence, $\hs_{j+1}^{-1}(k)=k-1$, and $\sigma(k-1)=k-1$, since $k-1$ is between $a+1$ and $n-a-1$. Therefore, $\hs_{i+1}(k-1) =k$. The second case is similar: if $a+1\le i\le n-a$, then
$\sigma(i)=i$, so $\sigma^{-1}(i)=i$ and $T_i(\sigma)= \hs_{i+1}\sigma \hs_{i+1}^{-1}$. Thus, $\hs_{i+1}^{-1}(k)$
will equal $k$, or $k-1$, or $n+1$. In all of these cases, $\sigma \hs_{i+1}^{-1}(k)=\hs_{i+1}^{-1}(k)$ and so
$\hs_{i+1}$ of it will be~$k$. Finally, if $i\ge n-a+1$, then $j\ge n-a+1$ and all three of $\hs_{i+1}$, $\sigma$ and
$\hs_{j+1}^{-1}$ fix $k$, giving the wanted.
\end{proof}

We are now ready to establish the following.

\begin{thm}
Set $\sigma=(12)(n-1,n)\in S_n$ for $n\ge4$. Then, an algebra satisfying $f_\sigma$ will be eventually commutative
of degree $2n-3$, but need not be eventually commutative of any lesser degree.
\end{thm}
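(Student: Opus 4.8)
The plan is to treat the two halves of the statement separately. The upper bound is immediate from the machinery already developed: since $\sigma(1)=2\neq 1$ and $\sigma(n)=n-1\neq n$, the permutation $\sigma=(12)(n-1,n)$ fixes neither $1$ nor $n$, so Theorem~\ref{thm:main} applies and yields eventual commutativity of degree $2n-3$ (for $n=4$ it yields degree $n+1$, which again equals $2n-3$). Everything nontrivial is in the sharpness: one must exhibit an algebra satisfying $f_\sigma$ that is \emph{not} eventually commutative of degree $2n-4$. Since eventual commutativity of degree $k$ propagates to all degrees $\geq k$, a single such failure rules out every smaller degree at once. The algebra to use is the relatively free algebra $R$ of the variety defined by $f_\sigma$, i.e.\ $R=F\langle x_1,x_2,\dots\rangle/I$ with $I$ the $T$-ideal generated by $f_\sigma$; for this $R$, ``not eventually commutative of degree $2n-4$'' is precisely the assertion $H_{2n-4}\neq S_{2n-4}$.

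First I would pin down $H_n$ for $R$. In degree $n$ the multilinear part of $I$ is spanned by the substitution instances $\pi f_\sigma = x_\pi - x_{\pi\sigma}$, $\pi\in S_n$ (no unit, so nothing shorter and no outer factors $u_0,u_{n+1}$ can occur). As $\sigma$ is an involution, the orbits of right multiplication by $\langle\sigma\rangle$ on $S_n$ all have size two, and a one-line linear-algebra check then gives $x_\tau - x_{\id}\in I$ exactly when $\tau\in\langle\sigma\rangle$; hence $H_n=\langle\sigma\rangle=\{\id,\sigma\}\subseteq S(n;2)$. The driver of the rest of the argument is the lemma proved just above this theorem: if $\sigma\in S(n;a)$ and $n\geq 2a+1$, then $T_i(\sigma)\in S(n+1;a+1)$ for every $i$. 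By Theorem~\ref{thm:Consequences}, every element of $H_k$ with $k>n$ is obtained from an element of $H_n$ by a chain of $k-n$ operators $T_{i_1},\dots,T_{i_{k-n}}$, so I would push such a chain through that lemma one step at a time. Starting from $S(n;2)$, after $j$ steps the result lies in $S(n+j;\,2+j)$, provided the lemma's hypothesis holds at each intermediate step $1\leq j'\leq j$; that hypothesis at step $j'$ reads $(n+j'-1)\geq 2(j'+1)+1$, i.e.\ $n\geq j'+4$. Running this out to $j=n-4$ (so $n+j=2n-4$), the constraint at the last step is merely $n\geq n$, and one concludes $H_{2n-4}\subseteq S(2n-4;\,n-2,\,n-2)$.

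It remains to see that this subgroup is proper. Since $(n-2)+(n-2)=2n-4$, the ``first $n-2$'' and ``last $n-2$'' letters partition $\{1,\dots,2n-4\}$ into the two disjoint blocks $\{1,\dots,n-2\}$ and $\{n-1,\dots,2n-4\}$, and $S(2n-4;\,n-2,\,n-2)$ is exactly the group of permutations stabilizing each block; in particular it omits the transposition $(n-2,\,n-1)$. Hence $H_{2n-4}\neq S_{2n-4}$, as needed. The case $n=4$ requires no lemma at all: there $2n-4=4=n$, and the computation of $H_n$ already gives $H_4=\langle(12)(34)\rangle\neq S_4$.

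I expect the delicate point to be the iteration in the second paragraph. It is tight: the chain of $T$-operators lands in $S(2n-4;\,n-2,\,n-2)$ precisely at the degree where that group is still a proper subgroup, while one degree higher it would fill all of $S_{2n-4}$, consistent with the $2n-3$ upper bound. So the proof must check carefully that the degree hypothesis $n\geq j'+4$ of the auxiliary lemma survives at \emph{every} intermediate step (the binding one being the last), and that Theorem~\ref{thm:Consequences} genuinely forces each element of $H_{2n-4}$ into the subgroup reached by these chains rather than merely into a generating set, so that the containment $H_{2n-4}\subseteq S(2n-4;\,n-2,\,n-2)$ is legitimate.
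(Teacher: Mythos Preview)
Your proposal is correct and follows essentially the same route as the paper: invoke Theorem~\ref{thm:main} for the upper bound, then use the preceding lemma inductively to trap $H_{n+b}$ inside $S(n+b;\,2+b)$ for all $b\le n-4$, concluding that $H_{2n-4}\subseteq S(2n-4;\,n-2)$ is proper. You simply supply more detail than the paper does---explicitly computing $H_n$, verifying the hypothesis $n+j'-1\ge 2(j'+1)+1$ at each step, and treating $n=4$ by noting $H_4=\langle(12)(34)\rangle\neq S_4$ directly; your closing observation that $S(n+j;\,2+j)$ is a \emph{subgroup}, so that even if Theorem~\ref{thm:Consequences} only yielded a generating set for $H_{n+j}$ the containment would still follow, is a nice robustness remark absent from the paper.
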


\begin{proof}
In the proof that follows, we will assume that $n\ge5$. Indeed, if $n=4$, a direct computation of the $T_i(f_\sigma)$ demonstrates that $H_5=S_5$; we leave those details to the interested reader for inspection.

That the identity $f_\sigma$ does not imply eventual commutativity of degree less than $2n-3$ follows from the
previous lemma: in fact, by induction, all consequences $f_\tau$ of $f_\sigma$ of degree $n+b$, for $b\le n-4$, will lies in $S(n+b;2+b)$, a proper subgroup of $S_{n+b}$.
\end{proof}

\section{Special cases}\label{sec:special}

\subsection{Generalized commutativity}\label{sec:general and eventual}

The main notion in this section is the following variation of commutativity in an algebra.

\begin{defn}\label{defn:generalized and eventual}
\label{defn:long element}
Fix a positive integer $n$. Define $\pi_n \in S_n$ to be the permutation given by $\pi(i) = n+1 - i$ for all $i$. That is, this $\pi_n$ reverses the sequence $12\cdots (n-1)n$.
An algebra $A$ is \emph{generalized commutative of degree $k$} if it satisfies the identity
$$x_1\cdots x_k=x_k\cdots x_1=x_{\pi(n)}.$$
\end{defn}

The permutation $\pi_n$ is what is known as the \emph{long element} in the context of Coxeter groups (see, for example, \cite{BB}).

\begin{lem}\label{lem:general commutativity and even permutations}
If an algebra is generalized commutative of degree $n$, then it satisfies the identities $f_\tau$ for all even permutations $\tau\in S_{n+1}$.
\end{lem}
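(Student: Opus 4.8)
The plan is to show that the subgroup $H_{n+1}\subseteq S_{n+1}$ contains the alternating group $A_{n+1}$, which is equivalent to the claimed conclusion by Corollary~\ref{cor:2.3}. Since $A$ is generalized commutative of degree $n$, it satisfies $f_{\pi_n}$; here $\pi_n$ fixes neither $1$ nor $n$ (as long as $n\ge 2$), so the general machinery of Section~\ref{sec:2} applies. In particular, $\pi_n(1)=n$ and $\pi_n(n)=1$, so in the language of Corollary~\ref{cor:4identities} we have $x_{\pi_n}=x_n u = v x_1$ with $i=n$ and $j=1$; thus $H_{n+1}$ contains the cycle $(1\,2\,\cdots\,n)$, and $H_{n+2}$ contains the transpositions $(12)$ and $(n+1,n+2)$.

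First I would observe that $T$-operations of the basic identity $f_{\pi_n}$ already produce many elements of $H_{n+1}$. Applying Lemma~\ref{lem:2.4} to $f_{\pi_n}$ with the various $i\in\{0,1,\ldots,n+1\}$ gives explicit permutations $\tau_i=\hs_{i+1}\,\pi_n\,\hs_{\pi_n^{-1}(i)+1}^{-1}\in H_{n+1}$; since $\pi_n^{-1}=\pi_n$, this is $\hs_{i+1}\,\pi_n\,\hs_{n+1-i+1}^{-1}$. The key point is to compute the product of any two of these (or of one with the $n$-cycle from Corollary~\ref{cor:4identities}) and check that a product of two transpositions lands in $H_{n+1}$; together with the fact that $H_{n+1}$ is a subgroup (Corollary~\ref{cor:subgroup}), this will let me conclude that $H_{n+1}\supseteq A_{n+1}$ as soon as I exhibit enough generators. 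Concretely, $3$-cycles generate $A_{n+1}$, so it suffices to produce a single $3$-cycle in $H_{n+1}$ and then conjugate it around: since $H_{n+1}$ already contains the full $n$-cycle $c=(1\,2\,\cdots\,n)$, conjugating one $3$-cycle by powers of $c$ yields all $3$-cycles supported on $\{1,\ldots,n\}$, which generate $A_n\subseteq S_{n+1}$; then one more generator moving the letter $n+1$ (available from one of the $T_i$ with $i$ near the right end, or from applying the reversal structure on the other side) upgrades $A_n$ to $A_{n+1}$.

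The cleanest route to the needed $3$-cycle is probably this: from $f_{\pi_n}$ itself we have the long element $w:=\pi_{n}\in H_n$, hence (extending to $S_{n+1}$ fixing $n+1$) $w\in H_{n+1}$; and from the $T_0$-operation, $x_1 f_{\pi_n}(x_2,\ldots,x_{n+1})$, we get the long element $w'$ of $\{2,\ldots,n+1\}$ (fixing $1$) in $H_{n+1}$. The product $w'w^{-1}=w'w$ is then in $H_{n+1}$, and a direct check shows it is an $(n-1)$-cycle, or more usefully that suitable products of such ``shifted long elements'' give a $3$-cycle; alternatively, since $w$ and $w'$ are both involutions, $w'w$ has a transparent cycle structure (a product of shifts). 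I would then combine with the transpositions guaranteed in $H_{n+2}$ via Lemma~\ref{lem:2.10} — which already tells us $H_{n+2}$ contains all permutations of $\{1,2\}$ and of $\{n+1,n+2\}$ — but the sharper input is that generalized commutativity is much more symmetric than a generic $\sigma$, so the $n$-cycle plus one extra transposition-type element should generate all of $A_{n+1}$ directly.

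The main obstacle I anticipate is parity bookkeeping: $\pi_n$ is an even permutation exactly when $n\equiv 0,1\pmod 4$, so depending on $n\bmod 4$ the ``obvious'' generators coming from $T$-operations may all lie in a single coset, and I must be careful that the subgroup they generate is exactly $A_{n+1}$ and not a smaller group or all of $S_{n+1}$ — the statement claims only even permutations, so I need $H_{n+1}\cap$ (even part) $=A_{n+1}$ with no odd permutation forced in. (The paper's later refinement, that one gets eventual commutativity of degree $n+1$ or $n+2$ according to $n\bmod 4$, is consistent with this: when $n\not\equiv 1$, some odd element does appear and $H_{n+1}=S_{n+1}$, while for this lemma we only assert the even ones.) So the heart of the argument is: (i) show $A_{n+1}\subseteq H_{n+1}$ by exhibiting a $3$-cycle and using the $n$-cycle to spread it out, and (ii) verify that every generator I use is genuinely a consequence of $f_{\pi_n}$ via Lemma~\ref{lem:2.4} and Corollary~\ref{cor:subgroup}. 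Step (i) is where the real work lies; step (ii) is routine once the permutations are written down.
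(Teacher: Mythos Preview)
Your plan is in the right spirit but it never lands on the short computation that actually proves the lemma, and the concrete steps you do attempt contain errors.  The paper's argument is much more direct than the route you outline: applying $T_i$ to $f_{\pi_n}$ (i.e., substituting $x_ix_{i+1}$ for the $i$th variable) yields in degree $n+1$ exactly the identity $x_{\id}=x_{s_i\pi_{n+1}}$, where $s_i=(i,i+1)$ and $\pi:=\pi_{n+1}$ is the long element of $S_{n+1}$.  Thus $s_i\pi\in H_{n+1}$ for every $i\in[1,n]$.  Since $\pi^2=\id$ and $\pi s_h\pi=s_{n+1-h}$, multiplying any two of these gives $(s_i\pi)(s_h\pi)=s_i s_{n+1-h}$, so every product $s_is_j$ lies in $H_{n+1}$.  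These generate $A_{n+1}$, and the proof is done.  There is no need for Corollary~\ref{cor:4identities}, $3$-cycle hunting, or parity bookkeeping.

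By contrast, your proposed calculations do not work as stated.  The product $w'w$ of the two shifted long elements is $i\mapsto i+2\pmod{n+1}$, the \emph{square} of the $(n{+}1)$-cycle, not an $(n{-}1)$-cycle; its cycle type depends on the parity of $n+1$.  And conjugating a single $3$-cycle by powers of the $n$-cycle $(1\,2\,\cdots\,n)$ produces only $n$ conjugates, not ``all $3$-cycles supported on $\{1,\ldots,n\}$''; getting from these to $A_n$ (let alone $A_{n+1}$) requires a further argument you have not supplied.  A cleaner salvage of your approach would be to observe that Corollary~\ref{cor:4identities} gives \emph{both} $c_1=(1\,2\,\cdots\,n)$ and $c_2=(2\,3\,\cdots\,n{+}1)$ in $H_{n+1}$, and that $c_2c_1^{-1}=(1,\,n{+}1,\,2)$ is a $3$-cycle whose $c_1$-conjugates $(k,\,n{+}1,\,k{+}1)$ generate $A_{n+1}$ --- but even this is more work than the paper's two-line computation with the $s_i\pi$.
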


\begin{proof}
Let $s_i$ be the adjacent transposition $(i,i+1)$. These permutations, also called \emph{simple reflections}, generate the symmetric group, and the collection of pairwise products $\{s_is_j\}$ generates the alternating group (see, for instance, \cite{BB}).

Let $\reverse:= \reverse_{n+1}$ be as in Definition~\ref{defn:long element}. Notice that $\reverse^2 = \id$, and $s_i\reverse=\reverse s_{n+1-i}$ for all $i \in [1,n]$.

Because $A$ satisfies generalized commutativity of degree $n$, we have
$$x_1 x_2 \cdots (x_i x_{i+1}) \cdots x_n x_{n+1} = x_{n+1} x_n \cdots (x_i x_{i+1}) \cdots x_2 x_1.$$
In other words, $x_{\id} = x_{s_i \reverse}$ for all $i \in [1,n]$. Referring to Lemma~\ref{lem:transitive equality}, it follows that
$$x_{\id} = x_{(s_i\reverse)(s_h\reverse)}$$
for all $i, h \in [1,n]$.
Recall that $(s_i\reverse)(s_{n+1-j}\reverse) = s_i s_j\reverse^2=s_i s_j$. Thus, $x_{\id} = x_{s_is_{n+i-h}}$ for all $i, h \in [1,n]$, and setting $j:=n+i-h$ shows that ${s_is_j}\in H_{n+1}$ for all $i, j \in [1,n]$. The result follows now from Lemma~\ref{lem:transitive equality}.
\end{proof}

For use in the next section, we state the following assertion in a slightly greater generality than needed for our present purposes.

\begin{lem}\label{lem:if one odd}
Assume that an algebra $A$ satisfies $f_\tau$ for all even $\sigma\in S_k$. If $A$ also satisfies $f_\nu$ for at least one odd permutation $\nu\in S_k$, then $A$ is eventually commutative of degree $k$; otherwise, it is eventually commutative of degree $k+1$.
\end{lem}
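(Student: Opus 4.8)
The plan is to split into the two cases described in the statement. First suppose $A$ satisfies $f_\nu$ for some odd $\nu \in S_k$. Since $A$ already satisfies $f_\tau$ for every even $\tau \in S_k$, the set $H_k$ contains the alternating group $A_k$ together with the odd permutation $\nu$; because $A_k$ has index $2$ in $S_k$ and $\nu \notin A_k$, the subgroup generated is all of $S_k$. By Corollary~\ref{cor:subgroup}, $H_k$ is a subgroup, so $H_k = S_k$, which says precisely that $A$ satisfies $f_\tau$ for all $\tau \in S_k$, i.e., $A$ is eventually commutative of degree $k$.

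Now suppose $A$ satisfies $f_\tau$ only for even $\tau \in S_k$ (this is the ``otherwise'' case — though the argument below does not actually need the restriction, it only needs that $A_k \subseteq H_k$, so it works in both cases and gives degree $k+1$ unconditionally, with the sharper degree $k$ available in the first case). I would show $H_{k+1} = S_{k+1}$. The idea is to produce, inside $H_{k+1}$, both an even-index-preserving copy of $A_k$ and at least one odd permutation of $S_{k+1}$. For the first part: if $\tau \in S_k$ is even, then $x_1 f_\tau(x_2,\ldots,x_{k+1})$ is an identity of $A$, and it has the form $f_{\tau'}$ where $\tau' \in S_{k+1}$ fixes $1$ and acts as $\tau$ (shifted) on $\{2,\ldots,k+1\}$; this $\tau'$ is again even. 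Ranging over all even $\tau \in S_k$, these $\tau'$ generate the alternating group on $\{2,\ldots,k+1\}$, so $H_{k+1}$ contains $\mathrm{Alt}(\{2,\ldots,k+1\})$. Symmetrically, using $f_\tau(x_1,\ldots,x_k)x_{k+1}$, it contains $\mathrm{Alt}(\{1,\ldots,k\})$. These two subgroups together generate $A_{k+1}$ (for $k \ge 3$, any two alternating groups on overlapping $k$-element subsets of a $(k+1)$-element set generate the full alternating group — a standard fact, provable by noting their intersection contains $\mathrm{Alt}(\{2,\ldots,k\})$ and a short computation produces a $3$-cycle moving both endpoints). So $A_{k+1} \subseteq H_{k+1}$.

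It then remains to exhibit one odd permutation in $H_{k+1}$. Here I would again apply the operators $T_i$ of Lemma~\ref{lem:2.4} to a suitable even $\tau \in H_k$, or equivalently use the substitution $x_i \mapsto x_i x_{i+1}$ into $f_\tau$: if $\tau = s_i s_j \in H_k$ (which lies in $H_k$ by hypothesis, since $A_k \subseteq H_k$), then $T_i(f_\tau)$ is $f_{\tau''}$ for an explicit $\tau'' \in S_{k+1}$, and for an appropriate choice of $\tau$ and $i$ the resulting $\tau''$ is odd. Concretely, choosing $\tau$ to be a product of two adjacent transpositions and merging one of the moved letters with its neighbor changes the parity, yielding an odd element of $H_{k+1}$. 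Combined with $A_{k+1} \subseteq H_{k+1}$ and Corollary~\ref{cor:subgroup}, this forces $H_{k+1} = S_{k+1}$, so $A$ is eventually commutative of degree $k+1$.

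The main obstacle is the middle step: verifying that the two overlapping alternating groups generate $A_{k+1}$, and — more delicately — confirming that some application of a $T_i$ (or a multiplication by a variable, which only ever produces even permutations and so is useless for parity) genuinely lands on an \emph{odd} permutation of $S_{k+1}$. One must check the parity bookkeeping in Lemma~\ref{lem:2.4} carefully, since $T_i$ conjugates by the cycles $\hs_{i+1}$ and $\hs_{\sigma^{-1}(i)+1}^{-1}$, whose lengths (and hence parities) depend on $i$ and $\sigma$; the claim is that one can always arrange the net parity change to be odd. Small cases ($k$ small, e.g. $k \le 3$) may need to be handled separately, as the "two alternating groups generate" fact and the existence of a suitable $\tau = s_i s_j$ both require enough room.
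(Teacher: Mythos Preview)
Your approach is essentially the same as the paper's: handle the first case via $A_k$ having index~$2$, and in the second case show $A_{k+1}\subseteq H_{k+1}$ and then exhibit an odd element of $H_{k+1}$ coming from some $T_i$ applied to an even $\tau\in H_k$. The difference is in execution, and the paper's is considerably cleaner on exactly the step you flag as delicate.

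For the inclusion $A_{k+1}\subseteq H_{k+1}$, the paper simply remarks that this is ``not hard to see''; your argument via the two overlapping alternating subgroups is correct (for $k\ge 3$ the $3$-cycles $(i,i+1,i+2)$ lie in one or the other and generate $A_{k+1}$), but more elaborate than needed.

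The real contrast is in producing an odd permutation in $H_{k+1}$. You leave this as a claim (``for an appropriate choice \ldots\ the resulting $\tau''$ is odd'') and explicitly call the parity bookkeeping a ``main obstacle.'' The paper avoids all bookkeeping by making one concrete choice: take the even permutation $(123)\in H_k$ and apply $T_2$, i.e., substitute $x_2\mapsto x_2x_3$ in $f_{(123)}$. This yields
\[
x_1(x_2x_3)x_4\cdots x_{k+1}=(x_2x_3)x_4x_1x_5\cdots x_{k+1},
\]
which is $f_{(1234)}$, and $(1234)$ is odd. So the step you worried about is settled by a single explicit example, with no need to analyze the cycles $\hs_{i+1}$ in general. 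Your proposal is correct in outline but incomplete as written; the paper's choice of $(123)$ and $T_2$ is the missing concrete detail.
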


\begin{proof}
The result is trivial when $k=2$, so assume that $k \ge 3$.

Furthermore, Corollary~\ref{cor:subgroup} shows that $H_k$ is a group and that, by hypothesis, it contains the alternating group $A_k$, which is of index $2$. Hence, if $H_k$ is bigger than $A_k$, then $H_k$ must be all of $S_k$.

It is, however, not hard to see that, if $H_k$ contains $A_k$, then $H_{k+1}$ contains $A_{k+1}$.

Now, suppose that $x_1\cdots x_k$ is not equivalent to any odd permutation of the $x_i$. The permutation $(123)$, written here in cycle notation, is even. Thus, in degree $k$, the algebra $A$ satisfies the identity $f_{(123)}$:
$$x_1\cdots x_k=x_2x_3x_1x_4\cdots x_k;$$
then, in degree $k+1$, it will satisfy $T_2(f_{(123)})$, namely
$$x_1(x_2x_3)x_4x_5\cdots x_{k+1} = (x_2x_3)x_4x_1x_5\cdots x_{k+1},$$
which insures that $(1234)\in H_{k+1}$. But this permutation $(1234)$ is odd, completing the proof.
\end{proof}

We now arrive at the principal result of this section.

\begin{thm}\label{thm:eventually commutative}
Let $A$ be generalized commutative of degree $n$. If $n\not\equiv 1$  \text{mod} $4$, then $A$ is eventually commutative of degree $n+1$; otherwise, it is eventually commutative of degree $n+2$.
\end{thm}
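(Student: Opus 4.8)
The strategy is to combine the two preceding lemmas, feeding the output of Lemma~\ref{lem:general commutativity and even permutations} into Lemma~\ref{lem:if one odd}. By Lemma~\ref{lem:general commutativity and even permutations}, an algebra $A$ that is generalized commutative of degree $n$ satisfies $f_\tau$ for all even permutations $\tau \in S_{n+1}$. Thus $A$ satisfies the hypothesis of Lemma~\ref{lem:if one odd} with $k = n+1$. It remains only to decide whether $A$ also satisfies $f_\nu$ for at least one \emph{odd} permutation $\nu \in S_{n+1}$: if so, then $A$ is eventually commutative of degree $n+1$, and if not, then of degree $n+2$. So the whole theorem reduces to the parity question: when is the defining identity $x_1\cdots x_n = x_n\cdots x_1$ (or one of its consequences in degree $n+1$) given by an odd permutation?

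\textbf{Key steps.} First I would observe that $A$ satisfies $f_{\reverse_n}$ in degree $n$, where $\reverse_n$ reverses $1,\ldots,n$, and that $\reverse_n$ has sign $(-1)^{\binom{n}{2}} = (-1)^{n(n-1)/2}$. This sign is $+1$ precisely when $n \equiv 0$ or $1 \pmod 4$, and $-1$ when $n \equiv 2$ or $3 \pmod 4$. So when $n \equiv 2, 3 \pmod 4$, the degree-$n$ identity $f_{\reverse_n}$ is itself odd; passing to degree $n+1$ via $T_0$ (i.e. $x_1 f_{\reverse_n}(x_2,\ldots,x_{n+1})$) we get an element of $H_{n+1}$ whose underlying permutation is $\reverse_n$ viewed as fixing $1$, still of the same (odd) sign. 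Hence Lemma~\ref{lem:if one odd} gives eventual commutativity of degree $n+1$ in these two residue classes. When $n \equiv 0 \pmod 4$, I would instead exhibit an odd permutation in $H_{n+1}$ by a small computation in degree $n+1$: for instance, apply one of the operators $T_i$ to $f_{\reverse_n}$ and compute the sign of the resulting permutation using Lemma~\ref{lem:2.4}, which expresses $T_i(f_\sigma)$ as $f_\tau$ with $\tau = \hs_{i+1}\,\sigma\,\hs_{\sigma^{-1}(i)+1}^{-1}$; since $\hs_m$ is an $(n+2-m)$-cycle, its sign is controlled, and one can choose $i$ so that $\tau$ is odd. (Alternatively, note that $\reverse_n$ and $\reverse_{n+1}$ sit inside $H_{n+1}$ — the latter because generalized commutativity in degree $n$ implies it in degree $n+1$ — and compute the sign of their product $\reverse_{n+1}\reverse_n$, which is an $(n+1-n)\cdots$-type permutation whose parity can be read off directly.) Finally, when $n \equiv 1 \pmod 4$, I would argue the \emph{negative} direction: $\reverse_n$ is even, all its images under the $T_i$ remain even (one checks via Lemma~\ref{lem:2.4} that each $T_i$ changes the sign by a fixed amount that happens to be $+1$ in this case, using Theorem~\ref{thm:Consequences} to know that $H_{n+1}$ is generated by these images), so $H_{n+1} \subseteq A_{n+1}$ and, since it already contains $A_{n+1}$ by Lemma~\ref{lem:general commutativity and even permutations}, we get $H_{n+1} = A_{n+1}$ exactly, forcing degree $n+2$.

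\textbf{Main obstacle.} The delicate point is the $n \equiv 1 \pmod 4$ case, where I must show that $A$ genuinely fails to be eventually commutative of degree $n+1$ — i.e. that $H_{n+1}$ is exactly $A_{n+1}$ and not larger — rather than merely that the obvious identities are even. This requires knowing that \emph{every} identity of degree $n+1$ is a consequence of $f_{\reverse_n}$, which is where Theorem~\ref{thm:Consequences} enters: it tells us $H_{n+1}$ is generated (as it arises from the $T$-ideal) by the permutations obtained from $\reverse_n$ by the operators $T_0,\ldots,T_{n+1}$ together with the even permutations already known. So the crux is the sign computation: verifying that for $n \equiv 1 \pmod 4$ each $T_i(f_{\reverse_n})$ corresponds to an \emph{even} permutation of $S_{n+1}$. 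Using Lemma~\ref{lem:2.4} with $\sigma = \reverse_n$ (so $\sigma^{-1}(i) = n+1-i$), we have $\tau = \hs_{i+1}\,\reverse_n\,\hs_{n-i+2}^{-1}$; since $\mathrm{sgn}(\hs_m) = (-1)^{n+1-m}$ and $\mathrm{sgn}(\reverse_n) = (-1)^{n(n-1)/2} = +1$ here, one gets $\mathrm{sgn}(\tau) = (-1)^{(n-i)}\cdot(+1)\cdot(-1)^{(n-i-1)} = -1 \cdot (-1)^{2(n-i)} $, wait — this needs to be done carefully, and the honest check is that the two cycle-signs combine to $+1$ for all $i$ when $n \equiv 1 \pmod 4$. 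I expect this bookkeeping, together with confirming $T_0$ and $T_{n+1}$ (which just pad by a fixed point and do not change the sign) also give even permutations, to be the only real work; once it is done, the theorem follows immediately from Lemma~\ref{lem:if one odd} and Lemma~\ref{lem:general commutativity and even permutations}.
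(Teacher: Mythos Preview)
Your plan for $n\not\equiv 1\pmod 4$ is essentially the paper's argument. The paper observes (already inside the proof of Lemma~\ref{lem:general commutativity and even permutations}) that $s_i\reverse_{n+1}\in H_{n+1}$ for every $i$; this is precisely the permutation that $T_i(f_{\reverse_n})$ produces, so your sign computation via Lemma~\ref{lem:2.4} would recover the same odd element when $n\equiv 0\pmod 4$. The paper likewise uses $\reverse_n$ (viewed as fixing $n{+}1$) to supply an odd element when $n\equiv 2,3\pmod 4$, matching your $T_0/T_{n+1}$ idea. One quibble: your parenthetical assertion that ``generalized commutativity in degree $n$ implies it in degree $n{+}1$'' (hence $\reverse_{n+1}\in H_{n+1}$) is not established anywhere and is not what the paper uses; the elements known to lie in $H_{n+1}$ are the $s_i\reverse_{n+1}$, not $\reverse_{n+1}$ itself.

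The genuine problem is your treatment of $n\equiv 1\pmod 4$. You set out to prove that $H_{n+1}$ is \emph{exactly} $A_{n+1}$, i.e.\ that degree $n{+}2$ is necessary. But the theorem as stated only claims that $A$ is eventually commutative of degree $n{+}2$, i.e.\ that $n{+}2$ \emph{suffices}; and this is immediate from the ``otherwise'' branch of Lemma~\ref{lem:if one odd} applied with $k=n{+}1$. No further work is required, and the paper does none. Moreover, your proposed argument cannot succeed as written: Theorem~\ref{thm:Consequences} describes $H_k$ only for the algebra whose $T$-ideal is \emph{generated} by the given $f_\sigma$'s, so at best you would show $H_{n+1}=A_{n+1}$ for the relatively free algebra in the variety defined by $f_{\reverse_n}$. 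For an arbitrary $A$ that is generalized commutative of degree $n$ the conclusion is simply false (take $A$ commutative, where $H_{n+1}=S_{n+1}$). So that entire paragraph is both unnecessary for the stated theorem and aimed at a claim that does not hold in the generality of the hypothesis.
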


\begin{proof}
In view of Lemma~\ref{lem:general commutativity and even permutations}, we have $f_\tau\in H_{n+1}$ for all even permutations $\tau \in S_{n+1}$. The length of $\reverse_k \in S_k$ is $k(k-1)/2$, so $\reverse_k$ is odd if $k\equiv 2$ or $3$, and even if $k\equiv 0$ or $1$, all modulo $4$.

As noted in the proof of Lemma~\ref{lem:general commutativity and even permutations}, the algebra $A$ satisfies the identities $f_{s_i\reverse_{n+1}}$. If $n+1\equiv 0$ or $1$ (i.e., if $n \equiv 0$ or $3$), then $s_i\reverse_{n+1}$ is odd. Consequently, owing to Lemma~\ref{lem:if one odd}, the algebra $A$ is eventually commutative of degree $n+1$.

However, the algebra $A$ always satisfies
$$x_1\cdots x_{n+1}=x_n\cdots x_1 x_{n+1},$$
which can be written as $x_{\id}=x_{\rho}$, where $\rho$ has the same parity as $\reverse_n$. Hence, this $\rho$ is odd if $n\equiv 2$ or $3$, and this case is again handled using Lemma~\ref{lem:if one odd}. In sum, the algebra $A$ is eventually commutative of degree $n+1$, provided $n\equiv0,2$ or $3$.

The remaining case when $n\equiv1$ follows from Lemma~\ref{lem:if one odd}, in which case $A$ is eventually commutative of degree $n+2$.
\end{proof}

\subsection{Two more examples}\label{sec:more examples}

We now study two other fairly natural examples. Firstly, consider $\sigma=(1n)$ so that the algebra $A$ satisfies
\begin{equation}
f_{(1n)}=x_1x_2\cdots x_{n-1}x_n=x_nx_2\cdots x_{n-1}x_1.\label{eq:2}
\end{equation}

Thereby, we obtain:

\begin{thm}
If an algebra satisfies \eqref{eq:2}, then it is eventually commutative of degree $n+1$.
\end{thm}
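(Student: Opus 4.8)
The plan is to show that, for $\sigma=(1n)$, the subgroup $H_{n+1}\subseteq S_{n+1}$ is all of $S_{n+1}$, which is exactly the statement of eventual commutativity of degree $n+1$. Since $\sigma=(1n)$ fixes neither $1$ nor $n$ (for $n\ge 2$, and the claim is trivial for small $n$), Corollary~\ref{cor:4identities} already applies with $i=n$ and $j=1$: writing $x_\sigma = x_n x_2\cdots x_{n-1}x_1 = x_i u = v x_j$ with $i=n$, $j=1$, it tells us that $H_{n+1}$ contains the $n$-cycle $(12\cdots n)$ and the $(n+1-1+1)=(n{+}1)$-cycle... more precisely the cycle $(j+1,\ldots,n+1)=(2,3,\ldots,n+1)$. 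So $H_{n+1}$ contains both $(12\cdots n)$ and $(2\,3\cdots n{+}1)$.

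The key step is then purely group-theoretic: the two cycles $c_1=(1\,2\,\cdots\,n)$ and $c_2=(2\,3\,\cdots\,n{+}1)$ generate $S_{n+1}$. I would verify this by a short computation: $c_1 c_2^{-1}$ (or a similar product) is a $3$-cycle or a transposition, and together with one of the long cycles this suffices to generate the full symmetric group, using the standard fact that a transposition together with an $(n{+}1)$-cycle through which it "moves" generates $S_{n+1}$. Concretely, I would identify an explicit adjacent transposition $s_j=(j,j{+}1)$ in the group generated by $c_1,c_2$, and then conjugate it by powers of $c_2$ to obtain all adjacent transpositions $s_2,\ldots,s_n$; throwing in a single transposition involving the letter $1$ (extracted from $c_1$) completes the generation. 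By Corollary~\ref{cor:subgroup} the set $H_{n+1}$ is genuinely a subgroup, so containing a generating set of $S_{n+1}$ forces $H_{n+1}=S_{n+1}$.

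The main obstacle is really just bookkeeping in the group-theoretic step: one must be careful that Corollary~\ref{cor:4identities} is being invoked with the correct identification of $i$ and $j$ for this particular $\sigma$, and that the resulting two cycles are the ones claimed (it is easy to be off by one in the cycle ranges, since $x_\sigma$ here is $x_n x_2 x_3 \cdots x_{n-1} x_1$, so $u = x_2\cdots x_{n-1}x_1$ and $v = x_n x_2 \cdots x_{n-1}$). Once the two cycles are pinned down correctly, the claim that $\langle (1\,2\cdots n),\,(2\,3\cdots n{+}1)\rangle = S_{n+1}$ is standard and can be dispatched in a line or two, or even cited. I would also dispose of the degenerate small cases ($n=2$, and perhaps $n=3$) by direct inspection before running the general argument, since the "fixes neither $1$ nor $n$" framework and the cited corollaries are implicitly aimed at $n$ not too small.

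\medskip

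\noindent\emph{Remark on an alternative route.} Instead of Corollary~\ref{cor:4identities}, one could observe that $\sigma=(1n)$ satisfies $\sigma(1)=n\ge 3$ when $n\ge 3$, so Lemma~\ref{lem:2.10} immediately gives that $H_{n+2}$ contains all permutations of $\{1,\ldots,n+1\}$ and of $\{j+1,\ldots,n+2\}$ with $j=1$, i.e.\ all of $S_{n+1}$ embedded on the first $n+1$ letters, together with $S_{\{2,\ldots,n+2\}}$ — and these two together generate $S_{n+2}$, giving degree $n+2$. To sharpen this to $n+1$ one must work in $H_{n+1}$ directly, which is why I prefer the cycle-generation argument above: it lands in $S_{n+1}$ in one shot.
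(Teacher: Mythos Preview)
Your invocation of Corollary~\ref{cor:4identities} is fine: with $x_\sigma=x_nx_2\cdots x_{n-1}x_1$ one has $i=n$ and $j=1$, so $H_{n+1}$ contains $c_1=(1\,2\cdots n)$ and $c_2=(2\,3\cdots n{+}1)$. The gap is in the group-theoretic step. The claim that $\langle c_1,c_2\rangle=S_{n+1}$ is \emph{false} when $n$ is odd: both $c_1$ and $c_2$ are $n$-cycles, hence even permutations when $n$ is odd, so $\langle c_1,c_2\rangle\subseteq A_{n+1}$. (For instance, with $n=3$ one has $\langle(123),(234)\rangle=A_4$.) Your computation $c_1c_2^{-1}$ does give a $3$-cycle, namely $(1,2,n{+}1)$, and from this and conjugation by $c_1,c_2$ one can indeed generate all of $A_{n+1}$, but no more. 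Note also that neither $c_1$ nor $c_2$ is an $(n{+}1)$-cycle, so the ``standard fact'' you cite about a transposition and a long cycle does not apply as written.

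What is missing is exactly one odd permutation in $H_{n+1}$, and the paper supplies it in the most direct way: multiplying the original identity on the right by $x_{n+1}$ shows $(1\,n)\in H_{n+1}$. This single transposition, together with either of your $n$-cycles (or together with $A_{n+1}$), immediately gives $H_{n+1}=S_{n+1}$. The paper's version then repackages the generators as the transposition $(n,n{+}1)$ and the full cycle $(1\,2\cdots n{+}1)$, but the essential extra input you need is precisely the observation $(1\,n)\in H_{n+1}$.
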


\begin{proof}
In degree $n+1$, the algebra will satisfy $f_\sigma\cdot  x_{n+1}$, so that $(1n)\in H_{n+1}$, and by
Corollary~\ref{cor:4identities} it will contain the cycles $(12\ldots n)$ and $(n,n+1)$, and hence their product $(12\ldots n+1)$. It is well known that $(12)$ and $(12\cdots n+1)$ generate $S_{n+1}$ whence, by conjugation, so do $(n,n+1)$ and $(12\ldots n+1)$, as expected.
\end{proof}

The next case we now consider is $\sigma=(12\cdots n)$, so that
\begin{equation}
f_\sigma=x_1\cdots x_n-x_2\cdots x_n x_1
\label{eq:3a}\end{equation}

Thereby, we receive:

\begin{thm}
If an algebra satisfies \eqref{eq:3a}, then it is eventually commutative of degree $n+1$.
\end{thm}

\begin{proof}
In virtue of Corollary~\ref{cor:subgroup}, an algebra satisfying $f_\sigma$ must also satisfy
$$f_{\sigma^{-1}}(x_1,\ldots,x_n)=x_1\cdots x_n-
x_nx_1\cdots x_{n-1}.$$
Hence, exploiting Corollary \ref{cor:4identities}, $H_{n+1}$ will also contain both $(12)$ and $(12\ldots n+1)$, as promised.
\end{proof}

\section{General two-term identities\label{sec:last}}

\subsection{General permutations}

We now consider the case of permutations not satisfying $\sigma(1)\ne1$ and $\sigma(n)\ne n$. In fact, these more general permutations were considered by Latyshev in \cite{La}, and it is only for expositional reasons that we have been restricting to permutations fixing neither~1 nor $n$ up to this point.

\medskip

We are now prepared to prove the following.

\begin{thm} Let $x_\sigma=x_1\cdots x_i x_{\sigma(i+1)}\cdots x_{\sigma(j)}x_{j+1}\cdots x_n$, where $\sigma(i+1)\ne i+1$ and $\sigma(j) \ne j$. Then, an algebra satisfying $f_\sigma$ will satisfy the identities $f_\tau$, where
$\tau$ is a permutation of $k\ge 2(j-i)+1$ fixing the first $i$ and the last $n-j$ letters.\label{thm:general sigma}
\end{thm}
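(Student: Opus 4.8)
The plan is to reduce the general statement to the case already handled in Theorem~\ref{thm:main}. The obstruction to applying earlier results directly is that $\sigma$ fixes the prefix $x_1\cdots x_i$ and the suffix $x_{j+1}\cdots x_n$, so $\sigma$ is not a permutation of the type considered in Section~\ref{sec:two term identities}. But the action of $\sigma$ on the middle block $x_{i+1}\cdots x_j$ is exactly a permutation $\sigma'$ of the $m := j-i$ letters $\{i+1,\ldots,j\}$, and by hypothesis $\sigma'(i+1)\ne i+1$ and $\sigma'(j)\ne j$; that is, $\sigma'$ fixes neither the first nor the last of these $m$ letters. First I would make this reindexing precise: regard the middle block as a word in $m$ fresh variables and observe that $f_\sigma$ is literally $u_0\, f_{\sigma'}(z_1,\ldots,z_m)\, u_1$ after substituting $z_t = x_{i+t}$, $u_0 = x_1\cdots x_i$, $u_1 = x_{j+1}\cdots x_n$. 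Hence any identity that $f_{\sigma'}$ implies about its $m$ variables, when the variables are specialized to monomials and flanked by fixed prefixes/suffixes, is an identity of $A$.

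Next I would apply Theorem~\ref{thm:main} (together with its surrounding machinery, in particular that $H_\ell$ is a group and the stabilization lemmas Lemma~\ref{lem:increasing n} and Lemma~\ref{lem:S44}) to $f_{\sigma'}$: for every $\ell \ge 2m-3$ the algebra satisfies $f_{\sigma'}$-consequences realizing \emph{every} permutation of $\ell$ letters. Concretely, substituting $z_1 := x_{i+1}, \ldots, z_\ell := x_{i+\ell}$ and keeping $u_0 = x_1\cdots x_i$, $u_1 = x_{i+\ell+1}\cdots x_k$ with $k = i + \ell + (n-j)$, we get that $A$ satisfies $f_\tau$ for every $\tau \in S_k$ that fixes the first $i$ letters and the last $n-j$ letters and acts as an arbitrary permutation of the $\ell$ letters in between. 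Since $k = n + (\ell - m)$, the condition $\ell \ge 2m-3$ becomes $k \ge n + m - 3 = n + (j-i) - 3$; rewriting, $k \ge 2(j-i) + 1 + (n - 2j + 2i - 4)$... here I need to be careful, so the cleanest bookkeeping is to phrase everything in terms of the middle-block size: set $k = i + \ell + (n-j)$ with $\ell \ge 2m - 3$, which is exactly the range in which \emph{every} permutation of the inner $\ell$ letters is available. The bound $k \ge 2(j-i)+1$ stated in the theorem corresponds to the slightly weaker (and cleaner) threshold obtained from Corollary~\ref{cor:4identities} and Lemma~\ref{lem:2.10}/Lemma~\ref{lem:S44} applied in the shifted frame; I would track the arithmetic once and record it as $\ell \ge 2m - 3$, i.e.\ $k \ge 2(j-i) - 3 + i + (n-j)$, and then note this is subsumed by $k \ge 2(j-i)+1$ when $i + (n-j) \ge 4$, handling the small edge cases ($i + (n-j) \le 3$) directly as in the $n=3,4$ cases of Theorem~\ref{thm:main}.

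The main obstacle, and the only real content beyond bookkeeping, is justifying that the prefix $u_0$ and suffix $u_1$ genuinely cause no trouble: an identity derived for $f_{\sigma'}$ among $\ell$ free variables must remain valid when those variables are specialized to the particular elements $x_{i+1},\ldots,x_{i+\ell}$ of $A$ and the whole expression is multiplied on the left by $x_1\cdots x_i$ and on the right by $x_{i+\ell+1}\cdots x_k$. This is immediate from the definition of a T-ideal / the fact that consequences of $f_\sigma$ include all $u_0 f_\sigma(v_1,\ldots,v_n) u_1$ (cf.\ the discussion around \eqref{eq:consequnce} in the proof of Theorem~\ref{thm:Consequences}), since $f_\sigma$ itself already has the form $u_0 f_{\sigma'} u_1$ — so the argument is really just ``apply the already-proved theorem inside a larger monomial.'' I would close by remarking that this also recovers the displayed consequence stated in the introduction, $x_1\cdots x_k = x_1\cdots x_i\, x_{\tau(i+1)}\cdots x_{\tau(k)}\, x_{k+1}\cdots x_{n-j+k}$ for $\tau$ any permutation of $\{i+1,\ldots,k\}$ with $k$ in the stated range, as these are precisely the $f_\tau$ produced above.
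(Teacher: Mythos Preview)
Your proposal is correct and follows essentially the same route as the paper: factor $f_\sigma$ as $(\text{prefix})\cdot f_{\sigma'}\cdot(\text{suffix})$ with $\sigma'\in S_{j-i}$ fixing neither endpoint, invoke Theorem~\ref{thm:main} on $f_{\sigma'}$ to get every $f_\rho$ in the $T$-ideal it generates, and then lift back to $A$. The one place where the paper is more explicit is the lifting step you call ``immediate'': it writes $f_\rho=\sum_\alpha u_{\alpha,0}f_{\sigma'}(u_{\alpha,1},\ldots,u_{\alpha,m})u_{\alpha,m+1}$ in the free algebra and then observes that each summand, after left- and right-multiplying by the fixed prefix and suffix, becomes a genuine substitution instance of $f_\sigma$ by absorbing $u_{\alpha,0}$ into $x_i$ and $u_{\alpha,m+1}$ into $x_{j+1}$ --- exactly the mechanism behind your remark that ``$f_\sigma$ itself already has the form $u_0 f_{\sigma'} u_1$.'' (Your hesitation about the arithmetic of the bound is warranted; the paper's stated threshold $k\ge 2(j-i)+1$ and the $2(j-i)-3$ coming out of Theorem~\ref{thm:main} do not line up cleanly, and the paper does not reconcile them either.)
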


\begin{proof} As we mentioned, the theorem could be proven using Latyshev's lemma alluded to above. It could also be proven using Lewin's theorem from~\cite{L}. However, it is easy enough to prove directly from Theorem~\ref{thm:main}, which we now do.

We write the term $x_\sigma$ occurring in~\eqref{eq:3} as
$$x_1\cdots x_i u x_{j+1}\cdots x_n,$$
where $u$ has degree at least~2, and $u$ does not start with  $x_{i+1}$ nor end with $x_{j}$. Writing $y_1,\ldots,
y_{j-i}$ for $x_{i+1},\ldots, x_{j}$, equation~\eqref{eq:1} can be written as
$$x_1\cdots x_n- x_1\cdots x_i y_{\tau(1)}\cdots y_{\tau(j-i)}x_j\cdots x_n=x_1\cdots x_i f_{\tau,1}(y_1,\ldots,
y_{j-i})x_j\cdots x_n,$$
for some $\tau\in S_{j-i-1}$. Note that $\tau$ fixes neither~1 nor~$j-i-1$. However, in view of Theorem~\ref{thm:main}, the identity $f_{\tau}$ yields eventual commutativity of degree~$2(j-i)-3$. This means that $y_1\cdots y_{j-i}$ is in the $T$-ideal generated by $f_{\tau}$. Hence,
$$y_1\cdots y_k=\sum u_{\alpha,0}f_{\tau^t}(u_{\alpha,1},\ldots,u_{\alpha,j-i})u_{\alpha,j-i+1},$$
where the $u_{\alpha,i}$ are in the free ring generated by $y_1,\ldots,y_{j-i}$, and all are restricted to have positive degree, except for $\alpha=0$ and $\alpha=j-i+1$. Now, we multiply on the left by $x_1\cdots x_i$ and on the right by $x_{j+1}\cdots x_k$ to get $x_1\cdots x_k$ as a linear combination of terms of the form
\begin{multline*}
x_1\cdots x_i u_{\alpha,0}f_{\tau^t}(u_{\alpha,1},
\ldots,u_{\alpha,j-i})u_{j-i} x_{j+1}\cdots x_k=\\
f_{\sigma^t}(x_1,\ldots,x_{i-1}u_{\alpha,0},
u_{\alpha,1},\ldots,u_{\alpha,j-i}x_{j+1},\ldots,x_k).
\end{multline*}
Consequently, $x_1\cdots x_k$ is a consequence of $f_{\sigma}$, and thus we are done.
\end{proof}

\subsection{The case of $q\ne 1$}

In this section, we consider identities of the form
\begin{equation}
x_1\cdots x_n=q x_{\sigma(1)}\cdots x_{\sigma(n)},\ q\ne1,
\label{eq:3}\end{equation}
where $q$ is an element of the base field, and $\sigma\in S_n$. Specializing all of the $x_i$ to a single variable~$x$ yields $x^n=qx^n$ whence $(1-q)x^n=0$ and $x^n=0$. In characteristic~0 or characteristic greater than~$n$, the Dubnov-Ivanov-Nagata-Higman theorem, see Chapter~6 of~\cite{DF} or Theorem~12.2.16 of~\cite{AGPR}, states that an algebra nil of bounded degree must be nilpotent, but in positive characteristic it need only be locally nilpotent.

Let $f=x_{\id}-qx_\sigma$. The proof of Latyshev's theorem, namely Theorem~\ref{thm:Latyshev}, now
obtains {\it eo ipso\/}, because if $x_\sigma= x_iu$, then
$$zf(x_1,\ldots,x_n)-f(x_1,\ldots,zx_i,\ldots,x_n)=
zx_1\cdots  x_n -x_1\cdots zx_i\cdot x_n,$$
just as in \eqref{eq:Latyshev}. This gives the counterpart of Theorem~\ref{thm:main}, which we state as the following lemma.

\begin{lem} An algebra satisfying \eqref{eq:3}, where $\sigma$ fixes neither~1 nor~$n$, must be eventually commutative.
\end{lem}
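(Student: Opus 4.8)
The plan is to mimic exactly the argument used to prove Theorem~\ref{thm:main}, checking at each step that the scalar $q$ does not obstruct anything. The key observation, already recorded in the paragraph preceding the lemma, is that Latyshev's computation goes through verbatim for $f=x_{\id}-qx_\sigma$: writing $x_\sigma=x_iu$ with $i\ne1$, the combination $zf(x_1,\ldots,x_n)-f(x_1,\ldots,zx_i,\ldots,x_n)$ has the $q$-terms cancel, leaving $zx_1\cdots x_n-x_1\cdots zx_i\cdots x_n$, precisely \eqref{eq:Latyshev}. The same three substitutions then yield \eqref{eq:y1y2}, i.e. $y_1y_2x_1\cdots x_n=y_2y_1x_1\cdots x_n$, which is now a genuine two-term identity with coefficient~$1$. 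Symmetrically, using $\sigma(n)\ne n$ (or passing to $A^{op}$), one obtains \eqref{eq:Latyshev'} and \eqref{eq:y1y2'}.

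From this point on, every subsequent step in Section~\ref{sec:2} is about honest (coefficient-$1$) identities $f_\tau$ and the subgroups $H_k\subseteq S_k$, so none of them sees the scalar $q$ at all. Concretely, I would invoke Corollary~\ref{cor:4identities} to put the cycles $(12\ldots i)$ and $(j+1\ldots n+1)$ into $H_{n+1}$ and the transpositions $(12)$, $(n+1,n+2)$ into $H_{n+2}$; then Lemma~\ref{lem:2.10}, Lemma~\ref{lem:S44}, and Lemma~\ref{lem:increasing n} apply word for word to conclude $S(n+2;4,4)\subseteq H_{n+2}$ and, after iterating, $S(2n-3;n-1,n-1)=S_{2n-3}\subseteq H_{2n-3}$. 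Hence $A$ is eventually commutative of degree $2n-3$ (for $n\ge5$; the cases $n=3,4$ are handled as in Theorem~\ref{thm:main}). The only thing that must be said carefully is that Corollary~\ref{cor:4identities} and everything downstream of it depend on $A$ only through the \emph{derived} identities \eqref{eq:Latyshev}--\eqref{eq:y1y2'}, which we have just re-derived for the $q\ne1$ case; once that is granted, the machinery is characteristic-free and coefficient-free.

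I do not expect a genuine obstacle here — the lemma is deliberately stated as a routine transplant of Theorem~\ref{thm:main}. The one place to be slightly alert is the very first cancellation: one should note that the substitution $x_1\mapsto zx_i$ appearing in $f(x_1,\ldots,zx_i,\ldots,x_n)$ is legitimate because $f$ is multilinear, and that the coefficient $q$ multiplies $x_\sigma=x_iu$ on both sides in such a way that the two $q$-terms in the difference are literally equal and cancel; this uses that $\sigma(1)=i\ne1$, so the inserted $z$ lands in the same spot in both monomials. After that, the proof is: "apply the argument of Theorem~\ref{thm:Latyshev} to get \eqref{eq:y1y2} and \eqref{eq:y1y2'}, then run Corollary~\ref{cor:4identities} through Lemma~\ref{lem:increasing n} unchanged." I would therefore keep the write-up to two or three sentences, citing the relevant numbered results rather than reproducing their proofs, since nothing in them interacts with~$q$.
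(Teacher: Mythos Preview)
Your proposal is correct and mirrors the paper's own argument: the paper simply notes that Latyshev's cancellation \eqref{eq:Latyshev} goes through unchanged for $f=x_{\id}-qx_\sigma$, and then declares the lemma as ``the counterpart of Theorem~\ref{thm:main},'' exactly as you outline. One small caveat: your claim that everything downstream of Corollary~\ref{cor:4identities} depends on $A$ only through \eqref{eq:Latyshev}--\eqref{eq:y1y2'} is not quite literally true---the third case of Lemma~\ref{lem:S44} forms a fresh difference $f_\sigma(x_1z,\ldots)-f_\sigma(\ldots,zx_i,\ldots)$ directly from $f_\sigma$---but the $q$-terms cancel there for the same reason, so the plan still goes through.
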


With the same proof as in the preceding section, we get this counterpart to Theorem~\ref{thm:general sigma}.

\begin{lem} Let $x_\sigma=x_1\cdots x_i x_{\sigma(i+1)}\cdots x_{\sigma(j)}x_{j+1}\cdots x_n$, where $\sigma(i+1)\ne i+1$ and $\sigma(j) \ne j$. Then, an algebra satisfying \eqref{eq:3} will satisfy the identities $f_\tau$, where
$\tau$ is a permutation of $k\ge 2(j-i)+1$ fixing the first $i$ and the last $n-j$ letters.
\end{lem}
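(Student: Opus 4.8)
The plan is to mimic, almost verbatim, the reduction used in the proof of Theorem~\ref{thm:general sigma}, replacing the application of Theorem~\ref{thm:main} with the application of the preceding lemma (the $q\ne 1$ counterpart of Theorem~\ref{thm:main}). The key observation is that the scalar $q$ plays no structural role in the argument: it simply rides along as a multiplicative constant in each monomial.

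First I would write the term $x_\sigma$ appearing in \eqref{eq:3} as $x_1\cdots x_i\, u\, x_{j+1}\cdots x_n$, where $u$ is a monomial of degree $j-i\ge 2$ that does not begin with $x_{i+1}$ nor end with $x_j$. Setting $y_1,\ldots,y_{j-i}$ equal to $x_{i+1},\ldots,x_j$, I would rewrite the identity $f = x_{\id} - q x_\sigma$ in the form
$$x_1\cdots x_i\bigl(y_1\cdots y_{j-i} - q\,y_{\tau(1)}\cdots y_{\tau(j-i)}\bigr)x_{j+1}\cdots x_n,$$
where $\tau\in S_{j-i}$ is the permutation rearranging $y_1,\ldots,y_{j-i}$ inside $u$; by construction $\tau$ fixes neither $1$ nor $j-i$. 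Thus, as an identity in the variables $y_1,\ldots,y_{j-i}$ alone, the algebra satisfies a relation of exactly the shape \eqref{eq:3} with permutation $\tau$ on $j-i$ letters. The preceding lemma (the $q\ne 1$ analogue of eventual commutativity) then applies to this relation, so the algebra satisfies $y_1\cdots y_m = y_{\rho(1)}\cdots y_{\rho(m)}$ for all $m\ge 2(j-i)-3$ and all $\rho\in S_m$; equivalently $y_1\cdots y_m$ lies in the $T$-ideal generated by $f$.

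Next I would push this back up: write $y_1\cdots y_k$ as a linear combination of terms $u_{\alpha,0}\,g(u_{\alpha,1},\ldots)\,u_{\alpha,\ell}$, where $g$ ranges over the generating identities and each $u_{\alpha,r}$ is a monomial in $y_1,\ldots,y_{j-i}$ (with the end terms allowed to be empty, since $A$ need not be unital). Multiplying on the left by $x_1\cdots x_i$ and on the right by $x_{j+1}\cdots x_k$ produces $x_1\cdots x_k$ as a linear combination of substitution instances of $f$ — exactly as in the display in the proof of Theorem~\ref{thm:general sigma} — which shows that for $k\ge 2(j-i)+1$ every $f_\tau$ with $\tau$ fixing the first $i$ and the last $n-j$ letters is a consequence of $f$, as claimed.

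The one point requiring a little care, and the only place the presence of $q$ is visible, is the bookkeeping of scalars: each term in the expansion of $y_1\cdots y_k$ inside the $T$-ideal carries its own power of $q$, and after multiplying by $x_1\cdots x_i$ and $x_{j+1}\cdots x_k$ these must still assemble into genuine consequences of $f$ rather than of some rescaled polynomial. This is harmless because a consequence of $f$ remains a consequence after scaling any monomial substitution by a field element, so the powers of $q$ are simply absorbed into the coefficients $u_{\alpha,r}$ (or equivalently into the scalars of the linear combination). Hence the argument goes through with "the same proof as in the preceding section," and no characteristic hypothesis is needed. Since this is genuinely routine once the reduction is set up, I would state the lemma and remark that the proof is identical to that of Theorem~\ref{thm:general sigma} with the $q\ne 1$ eventual-commutativity lemma used in place of Theorem~\ref{thm:main}.
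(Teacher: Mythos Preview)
Your approach is correct and matches the paper's own one-line justification (``With the same proof as in the preceding section''). One point worth tightening: you write that ``as an identity in the variables $y_1,\ldots,y_{j-i}$ alone, the algebra satisfies a relation of exactly the shape \eqref{eq:3},'' but taken literally this is false --- $A$ need not satisfy $y_1\cdots y_{j-i}=q\,y_\tau$ on its own, only when sandwiched between $x_1\cdots x_i$ and $x_{j+1}\cdots x_n$. The correct statement, which your third paragraph then uses properly, is purely at the $T$-ideal level: the preceding lemma shows $f_\rho$ lies in the $T$-ideal generated by the polynomial $g_\tau:=y_{\id}-q\,y_\tau$ (apply it to the relatively free algebra modulo $g_\tau$), and your sandwiching step --- absorbing $u_{\alpha,0}$ into $x_i$ and $u_{\alpha,\ell}$ into $x_{j+1}$ --- then exhibits the desired $f_\tau$ as a consequence of $f_\sigma$. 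So the argument is sound; just avoid asserting that $A$ itself satisfies the shorter identity $g_\tau$.
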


Here is the main theorem of this section.

\begin{thm}\label{nil}
If an algebra $A$ satisfies \eqref{eq:3}, then it is nilpotent.
\end{thm}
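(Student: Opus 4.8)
The plan is to leverage the eventual commutativity already established, together with the nilness $x^n = 0$ derived from substituting a single variable, and to bootstrap from there. First I would dispose of the case where $\sigma$ fixes~$1$ or~$n$, or indeed both: if $\sigma$ is the identity then $(1-q)x_1\cdots x_n = 0$ forces $A^n = 0$ directly, and if $\sigma$ is nontrivial but fixes $1$ (or $n$), then writing $x_\sigma = x_1\cdots x_i u x_{j+1}\cdots x_n$ as in Theorem~\ref{thm:general sigma} with $j - i \ge 2$, the preceding lemma tells us $A$ satisfies $f_\tau$ for a nontrivial permutation $\tau$ fixing the outer letters. The key point is that once we know $A$ satisfies \emph{some} genuine two-term identity $x_{\id} = q x_\rho$ with $\rho \ne \id$ together with eventual commutativity, we can reduce to the core case.

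Next I would combine the eventual commutativity with the scalar~$q$. By the lemmas of this section $A$ is eventually commutative of some degree $m$, so $A^m$ is spanned by monomials in a fixed order, and moreover (passing through the identity \eqref{eq:3} applied inside long words, as in the derivation of \eqref{eq:y1y2}) any rearrangement of a sufficiently long monomial introduces a power of~$q$. The crucial observation is that rearranging $x_1\cdots x_N$ back to itself via a nontrivial cyclic-type move picks up a factor $q^c$ for some $c \ne 0$ that does not depend on the $x_i$, forcing $(1 - q^c)\,x_1\cdots x_N = 0$; since $q \ne 1$ and the base field is a field, either $q^c = 1$ — which one arranges to exclude by choosing the move appropriately, or by noting $q$ has finite multiplicative order~$d$ and working modulo~$d$ — or else $A^N = 0$ outright. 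I would set this up carefully so that the scalar obstruction is genuinely nontrivial: the identity $x_\sigma = q^{-1} x_{\id}$ composed with itself and with the order-$n$ nil relation should yield $q^{r}$-torsion for an exponent $r$ coprime to the order of $q$, or directly $A^N=0$.

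The remaining possibility is that no such scalar trick closes the argument because the relevant powers of $q$ collapse to~$1$; here I would fall back on the Dubnov--Ivanov--Nagata--Higman theorem. Since specializing gives $x^n = 0$, the algebra is nil of bounded degree $n$; in characteristic $0$ or characteristic exceeding $n$, DINH gives nilpotence immediately. For small positive characteristic, where DINH only yields local nilpotence, I would instead exploit that eventual commutativity makes $A^m$ a quotient of a commutative nil-of-bounded-degree algebra, hence nilpotent as a commutative algebra by a direct Vandermonde/Newton argument (or again DINH, which for commutative rings is elementary and characteristic-free up to the bound), so $A^{mt} = 0$ for suitable~$t$. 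I expect the main obstacle to be the bookkeeping of the scalar~$q$: one must show that the accumulated power of $q$ from a length-preserving rearrangement is a \emph{nonzero} exponent that the identity genuinely cannot absorb, and handle uniformly the degenerate subcases ($\sigma$ a power of a cycle, $\sigma$ fixing an endpoint, $q$ of small multiplicative order in small characteristic) without circularity. Once the nonzero-exponent claim is isolated, the conclusion $A^N = 0$ is immediate; otherwise the commutative-nilpotence fallback finishes the job.
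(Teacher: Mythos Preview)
Your plan circles around the right idea but misses the one-line argument that finishes it. The paper simply right-multiplies \eqref{eq:3} by $x_{n+1}\cdots x_k$ to obtain $x_1\cdots x_k = q\,x_\sigma x_{n+1}\cdots x_k$, and then invokes the eventual-commutativity lemmas of this section (Lemma~4.2 and its general-$\sigma$ analogue) to get $x_1\cdots x_k = x_\sigma x_{n+1}\cdots x_k$ for $k$ large. Subtracting gives $(1-q)\,x_\sigma x_{n+1}\cdots x_k = 0$, hence $A^k=0$. There is no bookkeeping of powers of $q$: the exponent is $c=1$, so your worry about $q^c=1$ never arises, and no case analysis on the multiplicative order of $q$ or on $\sigma$ fixing endpoints is needed beyond what Lemmas~4.2--4.3 already provide.

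By contrast, your fallback route through Dubnov--Ivanov--Nagata--Higman and ``$A^m$ commutative implies nilpotent'' is not only unnecessary, it is actually wrong in small characteristic. The algebra $\mathbb{F}_2[x_1,x_2,\ldots]/(x_i^2:i\ge 1)$ is commutative and satisfies $x^2=0$ for every element (since in characteristic~$2$ squaring is additive), yet $x_1x_2\cdots x_k\ne 0$ for every $k$, so it is not nilpotent. Thus ``commutative and nil of bounded degree'' does not force nilpotence without a characteristic hypothesis, and your Vandermonde/Newton remark cannot rescue this. The genuine content you need is exactly the pair of lemmas preceding the theorem; once you have those, the proof is the three lines above.
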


\begin{proof}
Right multiplying \eqref{eq:3} by $x_{n+1}\cdots x_{k}$ shows that $A$ satisfies $$x_1\cdots x_{k}  =q x_\sigma x_{n+1}\cdots x_{k}.$$
But, if $k$ is large enough, then by eventually commutativity, $A$ satisfies
$$x_1\cdots x_k = x_\sigma x_{n+1}\cdots x_{k},$$
and so $A$ will satisfy
$$(1-q) x_\sigma x_{n+1}\cdots x_{k+1}=0,$$
yielding $A^{k}=0$, as required.
\end{proof}

\section{Finite Generation}

In this section, we prove that the $T$-ideal of identities of an eventually commutative algebra, in any
characteristic, is finitely generated as a $T$-ideal, which proves the Specht conjecture in this special  case. The main ingredient in the proof is Cohen's theorem about $S$-ideals from~\cite{C}, that we will explicitly state below.

Let $X$ be the set $\{x_1,x_2,\ldots\}$ and let $F\langle X\rangle$ be the free $F$-algebra on $X$. In general, an ideal $I\lhd F\langle X\rangle$ is a $T$-ideal if it is invariant under all homomorphisms $F\langle X
\rangle\rightarrow F\langle X\rangle$. This means that if $f(x_1,\ldots,x_n)$ is in $I$, so is $f(g_1,\ldots,g_n)$
for every $g_1,\ldots,g_n\in F\langle X\rangle$. These $T$-ideals are precisely the ideals of p.i. algebras with~1, and they are sometimes called {\it $T_1$-ideals}. If, however, we want to study algebras that do {\bf not} necessarily have a unit, such as we do in this paper, the corresponding ideal of identities is sometimes called a {\it $T_0$-ideal}, and it is only invariant under substitutions $x_i\mapsto g_i$, where the elements $g_i$ have 0 constant terms. 

We shall say that an ideal in $F\langle X\rangle$ is generated by $f_1,\ldots,f_n$ as a $T_1$-ideal or as a
$T_0$-ideal if it is the smallest such containing $f_1,\ldots,f_n$. For the rest of this section, we will use the term $T$-ideal to refer to $T_0$-ideals and assume $F\langle X\rangle$ is without unit. Our main goal will be to prove that the $T$-ideal of identities of any eventually
commutative algebra is finitely generated.

Cohen's theorem deals with ideals in the commutative algebra $F[X]$ with a weaker invariance property. An ideal $I\lhd F[X]$ is called an {\it $S$-ideal} if, for all polynomials $f(x_1,\ldots,x_n)\in I$ and all order preserving maps $\sigma:\mathbb{N}\rightarrow\mathbb{N}$, the polynomial $f(x_{\sigma(1)},\ldots,x_{\sigma(n)})$ is also in~$I$. For convenience, we will denote this latter polynomial as $f(x_\sigma)$.

\medskip

In~\cite{C}, Cohen proved the following.

\begin{thm}\label{Coh} For every Noetherian ring $F$, the $S$-ideals of $F[X]$ have the Noetherian property. Equivalently, every $S$-ideal is finitely generated as an $S$-ideal.
\end{thm}

We will only be interested in the case in which $F$ is a field.

Let $C\lhd F\langle X\rangle$ be the commutator ideal, and let $C_n$ denote the vector space of homogeneous elements of degree~$n$. Since $C$ is a homogeneous ideal, we have $C=\oplus_n C_n$. We will use the notation $C_{\ge n}$ for the sum $\oplus_{k\ge n}C_k$ and the notation $C_{\le n}$ for the sum $\oplus_{k\le n}C_k$.

\medskip

Before proving the main statement, we need to establish three preliminary technicalities as follows. 

\begin{lem} The $S$-ideal of $F\langle X\rangle$ generated by $C_n\cap F\langle x_1,\ldots,x_n\rangle$ is $C_{\ge n}$.\label{lem:5.2}
\end{lem}

\begin{proof} By the well-known Jacobi identity $[xy,z]=x[y,z]+[x,z]y$, the commutator ideal $C$ is generated
by commutators of the form $[x_i,x_j]$, and so it is spanned by elements of the form $u[x_i,x_j]$, $[x_i,x_j]v$ and  $u[x_i,x_j]v$.
\end{proof}

Let $I$ be $T$-ideal in $F\langle X\rangle$ containing $C_n$. Such ideals are precisely the ideals of identities of algebras, eventually commutative of degree~$n$. Let $\pi:F\langle X\rangle\rightarrow F[X]$ be the usual
projection map. Then, $\pi(I)$ is an $S$-ideal of $F[X]$ and so it has a finite generating set $\pi(f_1),\ldots,\pi(f_t)$, where $f_1,\ldots,f_t\in I$.

\begin{lem} For any $g\in I$ there exists $f$ in the $T$-ideal generated by $f_1,\ldots,f_t$ and by $C_n$ such that $f-g$ has degree less than~$n$.\label{lem:5.3}\end{lem}

\begin{proof} It follows from hypothesis that $\pi(g)$ is in the $S$-ideal generated by $\pi(f_1),\ldots,\pi(f_t)$, say
$$\pi(g)=\sum_{i=1}^t\sum_\sigma a_{i,\sigma}f_i(x_\sigma).$$
Choosing $b_{i,\sigma}$ in $F\langle X\rangle$ such that $\pi(b_{i,\sigma})
=a_{i,\sigma}$, we have
$$\pi(g)=\pi\left(\sum_{i=1}^t\sum_\sigma a_{i,\sigma}f_i(x_\sigma)\right),$$
and so the difference $g-\sum_{i=1}^t\sum_\sigma a_{i,\sigma}f_i(x_\sigma)$
is in the kernel of $\pi$, namely in the commutator ideal~$C$. Call this difference $c$. Since $C$ is a graded ideal, we can decompose $c$ as $c=c_0+c_1$, where $c_0\in C_{\le n-1}$ involves only terms of degree less than or equal to
$n-1$ and $c_1\in C_{\ge n}$ involves the higher degree terms. But $c_1\in I$, and we may take $f=-c_1+\sum_{i=1}^t\sum_\sigma a_{i,\sigma}f_i(x_\sigma),$ as required.
\end{proof}

To complete the proof that $I$ is finitely generated as a $T$-ideal, we need to show that there is a finite set of polynomials that generated the elements of $I$ of degree less than~$n$. Let $I_0$ be the space of such elements.

\begin{lem} The ideal $I_0$ is generated as a $T$-ideal by $I_0\cap F\langle
x_1,\ldots,x_{n}\rangle$.\label{lem:5.4}
\end{lem}

\begin{proof} Let $f\in I$. For any variable $x_i$, we can break $f$ into a sum $f=g+h$, where the monomials occurring in $g$ involve $x_i$ and those in $h$ do not. Hence, setting $x_i=0$, we get that $h$ is in $I$ and so $g$ is also.  Moreover, in $I_0$ each monomial is of degree at most $n-1$ and so involves at most $n$ variables. Since a polynomial of $f(x_{i_1},\ldots,x_{i_{n}})$ can be gotten from $f(x_1,\ldots,x_{n-1})$
using the substitution $x_a\mapsto x_{i_a}$, the lemma follows.
\end{proof}

We can now prove our last theorem, which is the main result of this section.

\begin{thm} If $A$ is an eventually commutative algebra over any field,
then the ideal of identities of $A$ is finitely generated as a $T$-ideal.
\end{thm}

\begin{proof} Let $I\lhd F\langle X\rangle$ be the ideal of identities of~$A$. As in Cohen's Theorem~\ref{Coh}, we can get a finite set $\pi(f_1),\ldots,\pi(f_t)\in I$ that generate the $S$-ideal of the image of $I$ in $F[X]$. Invoking Lemma~\ref{lem:5.3}, the polynomials $f_1,\ldots,f_t$, together with $C_n$ and $I_0$, generate $I$ as a $T$-ideal.   But, by the Lemmas~\ref{lem:5.2} and~\ref{lem:5.4}, each of $C_n$ and $I_0$ are generated by their intersection with $F\langle x_1,\ldots x_n \rangle$. Since each of them involves only polynomials of degree $\le n$, the two intersections are finite dimensional and, therefore, finitely generated, as expected.
\end{proof}

\vskip3.0pc

\end{document}